\theoremstyle{plain}
\newtheorem{theorem}{Theorem}[section]
\newtheorem{proposition}[theorem]{Proposition}
\newtheorem{lemma}[theorem]{Lemma}
\theoremstyle{definition}
\newtheorem{definition}[theorem]{Definition}
\theoremstyle{remark}
\newtheorem{remark}[theorem]{Remark}
\numberwithin{equation}{section}
\newcommand{\la}{\lambda}
\newcommand{\al}{\alpha}
\newcommand{\R}{\mathbb R}
\newcommand{\C}{\mathbb C}
\newcommand{\eps}{\varepsilon}
\begin{document}

\bibliographystyle{plain}

\title[Helicoidal solutions of a fractional Allen-Cahn equation]{Solution of the fractional Allen-Cahn equation which are invariant under screw motion}

\subjclass[2010]{Primary: 35J61, 35J20; Secondary:  35B08, 47J30.}
\keywords{Fractional Laplacian, entire solutions, nonlocal perimeter.  }

\author{Eleonora Cinti, Juan Davila, and Manuel Del Pino}

\address{E.C., Wierstrass Institute for Applied Analysis and Stochastics,
Mohrenstr. 39,
10117 Berlin (Germany) }

\email{cinti@wias-berlin.de}

\address{J. D. and M. DP, Departamento de Ingenieria Matem\'atica
Facultad de Ciencias Fisicas y Matem\'aticas
Universidad de Chile. Casilla 170, Correo 3, Santiago, Chile.   }

\email{jdavila@dim.uchile.cl}
\email{delpino@dim.uchile.cl }
\maketitle

\begin{abstract}
We establish existence and non-existence results for entire solutions to the fractional Allen-Cahn equation in $\R^3$, which vanish on helicoids and are invariant under screw-motion.  In addition, we prove that helicoids are surfaces with vanishing nonlocal mean curvature.
\end{abstract}

\section{Introduction}

In this paper we establish existence results for a class of entire solutions to the fractional Allen-Cahn equation
\begin{equation}\label{AC}
(-\Delta)^\al u=F'(u)\quad \mbox{in}\;\R^n,
\end{equation}
where $F$ is a double-well potential, i.e. it satisfies the following properties:
\begin{itemize}
\item $t\mapsto F(t)$ is an even, positive function of class $C^{2,\gamma}$, with $\gamma>\max\{0,1-2\alpha\}$,
\item $F(t)\geq F(\pm 1)$ and equality holds if and only if $t=\pm 1$.
\end{itemize}

Moreover, we assume also that
\begin{equation}\label{hp-f}
F''(0)<0\quad \mbox{and}\quad F''(0)t\leq F'(t)\;\; \mbox{for every}\:\: t\geq 0.
\end{equation}
A classical example of such potential is $F(t)=\frac{1}{4}(1-t^2)^2$.

In the last years, there has been much interest in the study of existence of solutions to the fractional Allen-Cahn equation. In \cite{CS1,CS2,C-SM}, the existence of layer-type solutions, that is solutions monotone in one direction with limits $\pm 1$ at $\pm \infty$, has been established, while in \cite{Cinti}, one of the authors proved existence for saddle-shaped solution, that are solutions which vanish on the Simons cone
$$\mathcal C=\{(x,\xi)\in \R^{m}\times \R^m\::\:|x|=|\xi|\},$$
they are odd with respect to $\mathcal C$ and even with respect to the coordinate axis.
In all these works, the proof of existence relies on a variational argument, which makes use of the symmetries of the problem.
Also in this paper we are interested in solutions of \eqref{AC} which satisfies some symmetry properties, and the technique we use relies on the variational structure of \eqref{AC}.

More precisely we establish existence and non-existence results for solutions which vanish on helicoids and are invariant under screw motion (see \eqref{heli} and \eqref{screw} below for precise definitions).
For the classical Allen-Cahn equation, analogue results are contained in a work by Musso, Pacard and one of the authors \cite{DPMP}.

The interest in the study of solutions which vanish on helicoids comes from the fact that helicoids are surfaces with zero mean curvature.

There is a very strict connection between the Allen-Cahn equation and the classical theory of minimal surfaces. The classical result by Modica and Mortola \cite{MM} establishes that the energy functional associated to the classical Allen-Cahn equation, after a suitable rescaling, $\Gamma$-converges to the Perimeter functional.

In the fractional setting, an analougue $\Gamma$-convergence type result has been established in \cite{ABS,MdM} for powers $1/2\leq \alpha<1$, and in \cite{SV} for any power $0<\alpha<1$: after a suitable rescaling, the energy functional associated to the fractional Allen-Cahn equation $\Gamma$-converges to the classical perimeter if $1/2\leq \alpha<1$ and to the \emph{nonlocal perimeter} if $0<\alpha<1/2$. The notion of nonlocal perimeter has been introduced by Caffarelli, Roquejoffre and Savin in \cite{CRS}, where existence, regularity results and a monotonicity formula for nonlocal minimal surfaces have been established. Similarly to the case of classical perimeter, performing the first variation of the nonlocal perimeter functional, one can define the notion of \textit{nonlocal mean curvature} (see \eqref{NMC} below).

In this paper, we focus our attention both on the PDE problem and on the geometric one. Indeed in Theorems \ref{exist} and \ref{nonexist} below we establish existence and non-existence results for solutions to \eqref{AC} (in the case of space dimension $n=3$) which vanish on helicoids and are invariant under screw motion. Moreover, in Theorem \ref{helicoid} we prove that helicoids have zero nonlocal mean curvature.

We recall now the definition of helicoid and screw motion.
We will work in dimension $n=3$. Given $\lambda >0$, the \emph{helicoid} $H_\la$ is the minimal surface which can be parametrized in the following way
\begin{equation}\label{heli}
\R\times \R \ni (t,\theta)\mapsto \left(te^{i\theta},\frac{\lambda}{\pi}\theta\right)\in \C\times\R=\R^3.
\end{equation}
The screw motion of parameter $\la$ acting on $\C\times \R$ is given by
\begin{equation}\label{screw}
\sigma_\la^\beta(z,s)=\left(e^{i\beta}z,s+\frac{\la}{\pi}\beta\right).
\end{equation}
Obviously $H_\la$ is invariant under the action of $\sigma_\la^\beta$ for every $\beta \in \R$.

Our first main result is the construction of a nontrivial entire solution to \eqref{AC} in dimension $3$ which vanishes on $H_\la$, provided $\la$ is chosen sufficiently large.
We define
\begin{equation}\label{lambda}
\lambda_*:=\frac{\pi}{(-F''(0))^{\frac{1}{2\al}}}.
\end{equation}
\begin{theorem}\label{exist}
Let $n=3$.
Assume that $F$ is a double-well potential satisfying \eqref{hp-f} and that $\la>\la_* $. Then, there exists a solution of the fractional Allen-Cahn equation \eqref{AC} whose zero set is equal to $H_\la$. This solution is invariant under the screw motion of parameter $\lambda$, i.e.
$$u \circ \sigma_\la^\beta = u,$$
for every $\beta \in \R$.
\end{theorem}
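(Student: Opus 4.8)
The plan is to use a variational argument that exploits the screw-motion symmetry to reduce the problem to a quotient space where minimizers of the associated energy can be found and shown to be nontrivial. Since $\alpha \in (0,1)$, I will work with the Caffarelli–Silvestre extension: lift the equation \eqref{AC} to a degenerate-elliptic boundary value problem on $\R^3 \times (0,\infty)$ with weight $y^{1-2\alpha}$, so that $(-\Delta)^\alpha u = F'(u)$ in $\R^3$ becomes $\mathrm{div}(y^{1-2\alpha}\nabla U)=0$ in the half-space with nonlinear Neumann condition $-\lim_{y\to 0} y^{1-2\alpha}\partial_y U = c_{n,\alpha} F'(U)$ on $\R^3$. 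The natural energy is $E(U) = \int y^{1-2\alpha}|\nabla U|^2\, dx\, dy + 2c_{n,\alpha}\int_{\R^3} F(U)\, dx$, and the screw motion $\sigma_\lambda^\beta$ extends trivially in the $y$-variable to an isometry of this weighted space that fixes the weight.

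First I would set up the symmetric class: functions $U$ with $U\circ \sigma_\lambda^\beta = U$ for all $\beta$ and which are odd with respect to the helicoid $H_\lambda$ (i.e. change sign across it), with prescribed limits $\pm 1$ at infinity in the two "nodal domains" cut out by $H_\lambda$. A key geometric point is that the complement of $H_\lambda$ in $\R^3$ has exactly two connected components, interchanged by the map $(z,s)\mapsto(\bar z, s)$ combined with a half screw turn, so oddness across $H_\lambda$ is a well-defined constraint compatible with the screw symmetry. On the quotient $\R^3/\!\sim$ by the screw group (a solid region of finite "width" in the $s$-direction, since $\sigma_\lambda^{2\pi}$ is a pure translation by $2\lambda$), the renormalized energy — energy minus the energy of a suitable comparison function equal to $\pm 1$ away from $H_\lambda$ — is finite on the symmetric class. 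I would then minimize this renormalized energy over the symmetric, odd class. Existence of a minimizer follows from the direct method: coercivity from the $F(U)\geq F(\pm1)$ bound controlling the measure of the transition region, weak lower semicontinuity of the Dirichlet part, and compactness restored by the symmetry (the screw quotient is "compact in the dangerous direction"). By the symmetric criticality principle, the minimizer solves the extension problem, hence its trace $u$ solves \eqref{AC}.

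The main obstacle — and the place where the hypothesis $\lambda > \lambda_*$ enters — is showing the minimizer $u$ is \textbf{nontrivial}, i.e. that its zero set is exactly $H_\lambda$ rather than $u$ degenerating (e.g. $u\equiv 0$, or $u$ detaching from the helicoid). For this I would use \eqref{hp-f}: since $F''(0)<0$, the function $U\equiv 0$ is a strict local max of the energy in suitable directions, and one builds an explicit competitor supported near $H_\lambda$ whose energy is strictly below that of the trivial state precisely when $\lambda$ exceeds the threshold $\lambda_* = \pi/(-F''(0))^{1/(2\alpha)}$. The scaling here is the content of $\lambda_*$: linearizing near $0$, the relevant operator is $(-\Delta)^\alpha - (-F''(0))$, whose "width" of instability on a helicoidal strip of pitch $\lambda$ matches the $2\alpha$-homogeneity of the fractional Laplacian, so $\lambda^{2\alpha}(-F''(0)) > \pi^{2\alpha}$ is exactly the condition that the strip is wide enough to support a negative-energy perturbation. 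Then the inequality $F''(0)t \leq F'(t)$ for $t\geq 0$ is used to compare the full nonlinear energy with its quadratic approximation, upgrading the linear instability to a genuine energy decrease. Finally, once the minimizer is nontrivial, a sliding/maximum-principle argument together with the strong maximum principle for the extension shows $u>0$ strictly on one side of $H_\lambda$ and $u<0$ on the other, so the nodal set is precisely $H_\lambda$; standard elliptic regularity (using $\gamma > \max\{0,1-2\alpha\}$) gives that $u$ is a classical solution.
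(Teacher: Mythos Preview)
Your overall architecture is close to the paper's: work with the Caffarelli--Silvestre extension, impose screw-motion invariance and oddness across $H_\lambda$, minimize an energy, and use $\lambda>\lambda_*$ together with $F''(0)<0$ to rule out the trivial solution. But there is a genuine gap in the compactness step, and the way you propose to verify nontriviality is too schematic to go through as written.

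The claim that ``compactness is restored by the symmetry (the screw quotient is compact in the dangerous direction)'' is not correct. After quotienting by the screw action, the reduced domain is $[0,\infty)_r\times[0,\lambda]_s\times(0,\infty)_y$: only the $s$-direction becomes bounded, while both the radial variable $r$ and the extension variable $y$ remain unbounded. A renormalized energy on this half-infinite strip is not obviously well-defined or coercive, and there is no reason a minimizing sequence cannot lose mass at infinity in $r$ or $y$. The paper handles this by a different route: it minimizes the energy $E(\cdot,C_{R,L})$ on \emph{bounded} cylinders $C_{R,L}=[0,R]\times[0,\lambda]\times[0,L]$ with zero Dirichlet data on the lateral and top boundary, obtains minimizers $V_{R,L}$ with $0\le V_{R,L}\le 1$, and then passes to the limit along $L=R^b$ with $1/2<b<1$ via uniform $C^{2,\alpha}$ estimates and Arzel\`a--Ascoli.

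The heart of the proof is then ruling out $V\equiv 0$ in the limit, and here the paper's argument is more concrete than yours. It does not linearize at $0$ directly; instead it uses the one-dimensional problem on $[0,\lambda]$ (Lemma~\ref{existence-1d}), whose nontrivial minimizer $v_0(s,y)$ satisfies $E_0(v_0)<\lambda F(0)$ precisely when $\lambda>\lambda_*$. From $v_0$ one builds a competitor $W(r,s,y)=\eta(r)\,\xi(y)\,v_0(s,y)$ with a radial cutoff $\eta$ and a \emph{logarithmic} cutoff $\xi$ in $y$ between heights $R^b-R^a$ and $R^b$. A careful estimate shows the cutoff costs are $O(R^{2-\varepsilon})$, so $E(W,C_{R,R^b})\le \tfrac{R^2}{2}E_0(v_0)+o(R^2)$. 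If the limit were identically zero, minimality would force $E(W,C_{R,R^b})\ge \tfrac{\lambda}{2}F(0)R^2$, contradicting $E_0(v_0)<\lambda F(0)$. The specific coupling $L=R^b$ and the logarithmic cutoff are what make the $y$-cutoff cost subquadratic; this is the nonlocal difficulty that your sketch does not address. Once $V\not\equiv 0$ and $V\ge 0$, strict positivity follows from Hopf's lemma, giving the nodal set exactly $H_\lambda$.
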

In the following result, we also prove that Theorem \ref{exist} is, in some sense, sharp.
\begin{theorem}\label{nonexist}
Suppose that $\la\leq \la_*$. Then, there are no nontrivial bounded solutions of \eqref{AC}, which vanish on the helicoid $H_\la$ and are invariant under the screw motion of parameter $\lambda$.
\end{theorem}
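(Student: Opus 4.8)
The plan is to argue by contradiction using the variational/stability structure of the problem together with the hypothesis $F''(0)t \le F'(t)$. Suppose $u$ is a nontrivial bounded solution of \eqref{AC} which vanishes on $H_\la$ and is invariant under $\sigma_\la^\beta$. The first step is to reduce the PDE to a one-dimensional picture by exploiting the screw symmetry: introduce coordinates adapted to the screw motion (for instance, in the $(z,s)$-variables write $z = \rho e^{i\varphi}$ and use the new angular variable $\psi = \varphi - \frac{\pi}{\la}s$, which is constant along orbits of $\sigma_\la^\beta$). In these coordinates the zero set $H_\la$ becomes the hyperplane-like set $\{\psi \equiv 0 \pmod \pi\}$, so $u$ has a sign that alternates in strips of $\psi$; on a fundamental strip $u$ is, say, positive. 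The second step is to use the Caffarelli--Silvestre extension: let $U(x,y)$, $y>0$, be the $\al$-harmonic extension of $u$ to $\R^{3+1}_+$, so that $\mathrm{div}(y^{1-2\al}\nabla U)=0$ and $-\lim_{y\to0^+} y^{1-2\al}\partial_y U = c_{n,\al} F'(u)$. The screw invariance of $u$ lifts to an invariance of $U$.

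The heart of the argument is a linearized/stability inequality. Since $u$ vanishes on $H_\la$ and the nodal set separates $\R^3$ into screw-congruent pieces, on each nodal domain $u$ is a first eigenfunction of the linearized operator, which forces the linearization to be nonnegative on that domain. Concretely, testing the equation against $\phi = u\,\zeta$ on a nodal domain and integrating by parts in the extension, one obtains for all Lipschitz $\zeta$ compactly supported away from the axis
\begin{equation}\label{stability-ineq}
c_{n,\al}\int_{\R^n} F''(u)\,u^2\,\zeta^2\,dx \;\le\; \int_{\R^{n+1}_+} y^{1-2\al}\,U^2\,|\nabla \zeta|^2\,dx\,dy .
\end{equation}
Here I would use the fact that $u$ does not change sign on a nodal domain to justify division by $u$ in the standard Picone/hidden-convexity manipulation (this is where one must be careful near the nodal set $H_\la$ itself, and near the screw axis $\{z=0\}$). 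Now invoke the hypothesis \eqref{hp-f}: from $F''(0)t \le F'(t)$ for $t\ge 0$ and oddness one gets $F''(0)u^2 \le F'(u)u$ pointwise, and using the equation $F'(u)u = (-\Delta)^\al u\cdot u$ together with \eqref{stability-ineq} applied to the supersolution comparison, one derives a differential inequality whose only bounded solution compatible with the period $\la/\pi$ of the screw motion is trivial when $\la \le \la_* = \pi/(-F''(0))^{1/(2\al)}$.

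The cleanest way to make the last step rigorous is a Fourier/separation-of-variables argument in the screw-adapted coordinate: along the orbit direction the operator $(-\Delta)^\al$ restricted to screw-invariant functions, after straightening, behaves like $(-\partial_{\psi\psi} + \text{lower order})^\al$ acting on $\la/\pi$-antiperiodic functions, whose spectral gap is exactly $(\pi/\la)^{2\al}$; combined with $F''(0) = -(\pi/\la_*)^{2\al}$ and $\la \le \la_*$, positivity of the quadratic form $Q(u,u) = \int |(-\Delta)^{\al/2}u|^2 - \int F''(0)u^2$ restricted to this symmetry class forces $u\equiv 0$. The main obstacle I anticipate is making the integration by parts in \eqref{stability-ineq} and the division by $u$ fully legitimate despite the degeneracy of the weight $y^{1-2\al}$, the non-compactness of $\R^3$ (requiring a careful cutoff and a logarithmic-type estimate à la the proof of De Giorgi's conjecture in the fractional setting), and the singular geometry of $H_\la$ near the screw axis where the strips pinch together; handling the axis will probably require a separate capacity argument showing it is removable.
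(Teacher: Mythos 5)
Your overall strategy---test the equation against $u$ times a cutoff, use $F''(0)t\le F'(t)$ to pass to a linear comparison, and conclude from the spectral gap $(\pi/\la)^{2\al}$ in the orbit direction---is in essence the paper's strategy: there one multiplies the extended equation \eqref{eq-cil} by $V\eta^2$ with $\eta=\eta(r)$ a cutoff, discards the term $\frac{\la^2}{\pi^2 r^2}|V_s|^2$ (which has a good sign), invokes the one-dimensional nonexistence part of Lemma \ref{existence-1d} to get $\int y^{1-2\al}\bigl(|V_s|^2+|V_y|^2\bigr)+F''(0)\int V(\cdot,0)^2\ge 0$ for $\la\le\la_*$, and kills the remaining $|V_r|^2$ term by a Cauchy--Schwarz/Liouville iteration in $R$. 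Working with $V(r,s,y)=v(r,0,s,y)$ on the strip $s\in[0,\la]$ also dissolves the two difficulties you flag at the end: no capacity argument at the axis is needed (the measure $r\,dr$ and the sign of $\frac{\la^2}{\pi^2 r^2}|V_s|^2$ handle $\{r=0\}$), and no Picone-type division by $u$ on nodal domains is required.

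The genuine gap is that you never control the extended function $U$ in the extension variable $y$. Every integral in your stability inequality and in the subsequent cutoff argument is taken over $\R^{n+1}_+$ against the weight $y^{1-2\al}$, and for a merely bounded $U$ these diverge, since $\int_0^\infty y^{1-2\al}\,dy=\infty$. A logarithmic cutoff in $y$ \`a la De Giorgi does not repair this: to run the Liouville iteration one needs $\int_{r\in[R,2R]}\int_0^\infty y^{1-2\al}\,U^2\,|\eta_r|^2\,r\,dr\,ds\,dy$ bounded uniformly in $R$, which requires genuine integrability in $y$. The paper supplies exactly this missing ingredient in Proposition \ref{exp-decay}: because $V$ vanishes on $\{s=0\}\cup\{s=\la\}$, a barrier of the form $K\sin(\pi s/\la)\bigl[\varphi_2(\pi y/\la)+\eps\varphi_1(\pi y/\la)+\eps(e^{\pi r/(2\la)}+Ce^{-\pi r/(2\la)})\bigr]$, built from the modified Bessel solutions of \eqref{eqy}, yields $|V|\le K\varphi_2(\pi y/\la)\sim y^{\al-1/2}e^{-y}$. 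This decay is special to the symmetric situation---it fails for general bounded solutions of \eqref{AC2}, as the explicit Peierls--Nabarro layer shows---so it must be proved, not assumed. A second, smaller issue: your final ``Fourier/separation of variables'' step is not available as stated, since in screw-adapted coordinates the operator contains the $r$-dependent coefficient $\bigl(1+\frac{\la^2}{\pi^2 r^2}\bigr)\partial_{ss}$ and does not separate; the paper instead uses the pointwise inequality $\bigl(1+\frac{\la^2}{\pi^2 r^2}\bigr)|V_s|^2\ge |V_s|^2$ together with the already-established one-dimensional spectral statement for the extension problem on $[0,\la]\times\R^+$, which is where the threshold $\la_*$ actually enters.
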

To prove the previous results, we will realize the nonlocal problem (\ref{AC}) as a
local problem in $\R^{n+1}_+$ with a nonlinear Neumann condition
on $\partial \R_{+}^{n+1}=\R^n$ (the so called Caffarelli-Silvestre extension \cite{CS}). More precisely, if $u=u(x)$ is
a function defined on $\R^n$, we consider its $s$-\textit{harmonic extension}
$v=v(x,y)$ in $\R^{n+1}_+=\R^n\times(0,+\infty)$. It is
well known (see \cite{C-SM,CS}) that $u$ is a solution of
(\ref{AC}) if and only if $v$ satisfies
\begin{equation}\label{AC2}
\begin{cases}
\mbox{div}(y^{1-2\al} \nabla v) =0& \text{in}\; \R_{+}^{n+1},\\
- \frac{1}{c_\alpha}\lim_{y\rightarrow 0}y^{1-2\al}\partial_{y}v=F'(v)& \text{on}\; \R^{n}=\partial
\R_{+}^{n+1},
\end{cases}
\end{equation}
where $c_\alpha$ is the constant given in \eqref{c}.

The energy associated to problem \eqref{AC2} is
\begin{equation}\label{en}
E(v)=\frac{1}{2c_\alpha}\int_0^{+\infty}\int y^{1-2\al} |\nabla v|^2 dx dy +\int F'(v(x,0)) dx.
\end{equation}
Both proofs of Theorems \ref{exist} and \ref{nonexist} follows the ideas contained in \cite{DPMP} for the local case. However, due to the nonlocality of our problem, we have to deal with some difficulties. In particular, in order to prove the existence result, we will need to establish an energy estimate for minimizers, which will ensures us that the limit of a minimizing sequence does not identically vanish. In this step we follows the technique used in \cite{Cinti},  based on a suitable choice of a cut-off function. On the other hand, for the non-existence result, a crucial ingredient will be an exponential decay in the $y$-variable of the extended solution $v$ of \eqref{AC2}, under our symmetry assumption (see Proposition \ref{exp-decay} below).
 
We recall now the notions of nonlocal minimal surface and of nonlocal mean curvature.
Nonlocal minimal surfaces were introduced in \cite{CRS} as boundaries of measurable sets $E$ whose characteristic function $\chi_E$ minimizes an $H^\alpha$-norm. More precisely, for any $0<\alpha<1/2$, the nonlocal $\alpha$-perimeter functional of a set $E$ in an open set $\Omega\subset \R^n$, is given by
\begin{equation}\label{def-per}\text{Per}_{2\alpha}(E,\Omega):=L(E\cap \Omega,\R^n\setminus E) + L(E\setminus \Omega, \Omega \setminus E),\end{equation}
where, for two disjoint measurable sets $A$ and $B$, $L(A,B)$ denotes the quantity
$$L(A,B):=\int_A\int_B\frac{1}{|x-\bar x|^{n+2\alpha}}dx d\bar x.$$
A set $E$ is said to be $\alpha$-minimal in $\Omega$ if
$$\text{Per}_{2\alpha}(E,\Omega)\leq \text{Per}_{2\alpha}(F,\Omega)$$
for any measurable set $F$ with $E\triangle F \subset\subset \Omega$. Notice that in the literature (see e.g. \cite{CRS}) the fractional $s$-perimeter is defined for any $s\in (0,1)$ and corresponds to \eqref{def-per} for $s=2\alpha$. Here, we prefer to keep this notation to be consistent with the fractional power of the Laplacian.

Analougsly to the classical theory of minimal surfaces, performing the first variation of the nonlocal perimeter functional, we end up with the notion of \emph{nonlocal mean curvature}. More precisely, the Euler-Lagrange equation for $\text{Per}_{2\alpha}(E,\R^n)$ is given by
\begin{equation}\label{NMC}
\mathcal H^{2\alpha}_E(x):=\int_{\R^n} \frac{\chi_E(\bar x)-\chi_{E^C}(\bar x)}{|x-\bar x|^{n+2\alpha}} d\bar x=0,
\end{equation}
where $E^C:=\R^n\setminus E$ and $\mathcal H^{2\alpha}_E$ denotes the nonlocal mean curvature of the set $E$ (we write NMC for short).
Recently, there has been much interest in the study of surfaces with vanishing or constant nonlocal mean curvature.
In \cite{DDW}, two of the authors and J. Wei provide examples of surfaces with zero NMC. More precisely, they establish existence of nonlocal minimal Lawson cones (for any $0<\alpha<1/2$) and proved their stability in dimension $7$ for $\alpha$ small.  Moreover, for $\alpha \rightarrow 1/2$, they constructed the nonlocal analogue of catenoids.
Concerning the study of surfaces with constant NMC, in \cite{CFSMW, CFMN} the analogue of Alexandrov Theorem, charachterizing spheres as the only closed embedded hypersurfaces in $\R^n$ with constant mean curvature, has been established. In \cite{CFSMW} the existence of Delaunay-type surfaces (in the 2-dimensional case) is established, while in \cite{DDDV} periodic and cylindrical symmetric hypersurfaces, which
minimize a certain fractional perimeter under a volume constraint, are considered.

In our last result we provide a new example of surface with zero NMC: we prove that helicoids, which have zero (classical) mean curvature, also have zero NMC.

\begin{theorem}\label{helicoid}
For any $\lambda>0$ and for any $0<\alpha <1/2$, we have that
$$\mathcal H^{2\alpha}_{H_\la} \equiv 0.$$
\end{theorem}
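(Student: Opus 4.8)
The plan is to exploit the screw-motion symmetry of the helicoid to reduce the nonlocal mean curvature integral at a generic point to a single computation, and then to show that the integrand defining $\mathcal H^{2\alpha}_{H_\la}$ is odd under a reflection that fixes the base point. First I would use the fact that $H_\la$ is invariant under $\sigma_\la^\beta$ for every $\beta$, together with the translation $(z,s)\mapsto(z,s+c)$ composed with an appropriate rotation (which also preserves $H_\la$, since rotating and vertically translating by matched amounts is again a screw motion): this group of symmetries acts transitively on $H_\la$, so it suffices to verify $\mathcal H^{2\alpha}_{H_\la}(x_0)=0$ at one convenient point $x_0$, say $x_0=(0,0,0)$, which lies on $H_\la$. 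The integral \eqref{NMC} must be understood in the principal-value sense, so I would first record that near $x_0$ the surface $H_\la$ is smooth (it is an analytic graph in suitable coordinates), which guarantees the principal value exists, exactly as in \cite{CRS}.

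The key step is a symmetry argument for the integrand at $x_0=0$. The helicoid has the property that the map $R:(x_1,x_2,x_3)\mapsto(x_1,-x_2,-x_3)$ (rotation by $\pi$ about the $x_1$-axis) maps $H_\la$ to itself and interchanges the two components $H_\la$ and its complement $H_\la^C$ — this is the orientation-reversing symmetry of the helicoid through the axis. Indeed, from the parametrization \eqref{heli}, replacing $\theta$ by $-\theta$ and $t$ by $t$ gives $(te^{-i\theta}, -\frac{\la}{\pi}\theta)$, and conjugating $e^{-i\theta}$ corresponds to reflecting $x_2$; one checks that $R(H_\la)=H_\la$ while $R$ swaps the "upper" and "lower" sides, i.e. $\chi_{H_\la}(R\bar x)=\chi_{H_\la^C}(\bar x)$ for a.e.\ $\bar x$. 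Since $R$ fixes $x_0=0$ and is a linear isometry, the change of variables $\bar x \mapsto R\bar x$ in \eqref{NMC} gives
\begin{equation*}
\mathcal H^{2\alpha}_{H_\la}(0)=\int_{\R^3}\frac{\chi_{H_\la}(R\bar x)-\chi_{H_\la^C}(R\bar x)}{|R\bar x|^{3+2\alpha}}\,d\bar x = \int_{\R^3}\frac{\chi_{H_\la^C}(\bar x)-\chi_{H_\la}(\bar x)}{|\bar x|^{3+2\alpha}}\,d\bar x = -\mathcal H^{2\alpha}_{H_\la}(0),
\end{equation*}
hence $\mathcal H^{2\alpha}_{H_\la}(0)=0$. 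Combined with the transitivity of the symmetry group on $H_\la$, this yields $\mathcal H^{2\alpha}_{H_\la}\equiv 0$.

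The main obstacle I anticipate is making the principal-value cancellation fully rigorous: the integrand is not absolutely integrable near $x_0$, so the change of variables $\bar x\mapsto R\bar x$ must be carried out on symmetric truncated domains $\R^3\setminus B_\eps(0)$ (which are $R$-invariant, so the argument goes through) and only afterwards should one pass to the limit $\eps\to 0$; one must invoke the existence of the principal value — guaranteed by the $C^{1,1}$, indeed analytic, regularity of $H_\la$ away from measure-zero issues and the standard estimates of \cite{CRS,DDW} — to ensure the limit is well defined and finite before concluding it equals its own negative. A secondary point requiring care is the verification that $R$ genuinely swaps $H_\la$ with $H_\la^C$ rather than preserving each side; this is where one uses that the helicoid separates $\R^3$ into two congruent connected regions exchanged by the $\pi$-rotation about the axis, which can be read off directly from \eqref{heli} by tracking the sign of the appropriate coordinate function transverse to the surface.
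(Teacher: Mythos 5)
Your central symmetry computation is exactly the one the paper uses: in the parametrization \eqref{heli}, your map $R(x_1,x_2,x_3)=(x_1,-x_2,-x_3)$ is precisely the paper's bijection $f(te^{i\theta},\frac{\lambda}{\pi}(\theta+z))=(te^{-i\theta},-\frac{\lambda}{\pi}(\theta+z))$ between the two components $E_+$ and $E_-$ of $\R^3\setminus H_\lambda$, and the principal-value cancellation carried out on the $R$-invariant truncated domains $\{|x-x_0|\geq r\}$ is correct. The genuine gap is in the reduction to a single point. The symmetry group you invoke does \emph{not} act transitively on $H_\lambda$: a screw motion $\sigma_\lambda^\beta$ sends the point of $H_\lambda$ with parameters $(t,\theta)$ to the point with parameters $(t,\theta+\beta)$, so it preserves the distance $|t|$ to the axis, and the ``vertical translation composed with a matched rotation'' you describe is again just a screw motion. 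The orbits are therefore the individual helices $\{t=\mathrm{const}\}$ (plus the axis), and your argument at $x_0=(0,0,0)$ only establishes the vanishing of $\mathcal H^{2\alpha}_{H_\lambda}$ along the $x_3$-axis, not on all of $H_\lambda$.

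The repair is immediate and is what the paper does. Every point of $H_\lambda$ is carried by some $\sigma_\lambda^{-\theta}$ (an isometry of $\R^3$ preserving each of $E_+$ and $E_-$, hence commuting with $\mathcal H^{2\alpha}$) to a point $x_0=(t_0,0,0)$ of the $x_1$-axis; and your reflection $R$ fixes the \emph{entire} $x_1$-axis pointwise, not just the origin, so $|R\bar x-x_0|=|\bar x-x_0|$ for every such $x_0$ and the same odd-symmetry cancellation applies at each $(t_0,0,0)$. With this modification your proof coincides with the paper's. Two minor points of hygiene: the integrand should be written with $\chi_{E_+}-\chi_{E_-}$ rather than $\chi_{H_\lambda}-\chi_{H_\lambda^C}$, since $H_\lambda$ itself is a Lebesgue-null surface and the set $E$ in \eqref{NMC} must be taken to be one of the two open components; and the verification that $R$ genuinely exchanges $E_+$ with $E_-$ is most cleanly done, as in the paper, by rewriting $(te^{-i\theta},-\frac{\lambda}{\pi}(\theta+z))=(-te^{i(-\pi-\theta)},\frac{\lambda}{\pi}(-\pi-\theta+(\pi-z)))$, which exhibits the image point in the explicit parametrization of $E_-$.
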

The paper is organized as follows:
\begin{itemize}
\item in Section 2, we recall some preliminaries on the fractional Laplacian in a bounded domain with $0$-Dirichlet boundary condition.
\item in Section 3, we establish existence and non existence results for \eqref{AC} in the 1-dimensional case.
\item in Section 4, we prove our existence result Theorem \ref{exist}.
\item in Section 5, we give the proof of Theorem \ref{nonexist}.
\item in Section 6, we prove Theorem \ref{helicoid}.
\end{itemize}

\section{Preliminaries}

In this section we recall some well known facts about the fractional Laplacian in a bounded domain (see \cite{BCdP,CT}).

Let $\Omega$ be a sufficiently regular (say Lipschitz) domain in $\R^n$. We denote by $(-\Delta)^\alpha$ the fractional power of the Laplacian $-\Delta$ in $\Omega$ with zero Dirichelt boundary condition on $\partial \Omega$.

To define $(-\Delta)^\alpha$, let us consider $\{\mu_k,\zeta_k\}_{k=1}^\infty$ the eigenvalues and corresponding eigenfunctions of the Laplacian $-\Delta$ in $\Omega$ with zero Dirichlet boundary condition:
$$\begin{cases}
-\Delta \zeta_k=\mu_k\zeta_k &\mbox{in}\;\;\Omega\\
\zeta_k=0 &\mbox{on}\;\;\partial \Omega.
\end{cases}
$$
Let $u=\sum_{k=1}^\infty a_k\zeta_k$, then we define
\begin{equation}\label{spect}
(-\Delta)^\alpha u=\sum_{k=1}^\infty a_k\mu_k^{\alpha}\zeta_k.
\end{equation}
Let now $\mathcal C_\Omega$ be the cylinder $\mathcal C_\Omega =\Omega \times (0,\infty)$ and $\partial_L \mathcal C_\Omega=\partial \Omega\times (0,\infty)$ its lateral boundary.
Following \cite{BCdP,CT}, we can consider the extension operator with zero Dirichlet boundary condition on the all $\partial_L\mathcal C_\Omega$.
\begin{definition}
We define the $\alpha$-harmonic extension $v=\mathcal E_\alpha(u)$ in $\mathcal C_\Omega$ of a function $u$ defined in $\Omega$ and vanishing on $\partial \Omega$ as the solution of the problem
\begin{equation}\label{Dir-ext}
\begin{cases}
 \mbox{div}(y^{1-2\alpha}\nabla v)=0 & \mbox{in}\;\;\mathcal C_\Omega\\
v=0 & \mbox{on}\;\; \partial_L \mathcal C\\
v=u &\mbox{on}\;\; \Omega \times \{y=0\}.
\end{cases}
\end{equation}
\end{definition}
It is well known that (see \cite{BCdP,CT,CS})
$$(-\Delta)^\alpha u(x)=-\frac{1}{c_\alpha}\lim_{y\rightarrow 0}y^{1-2\alpha}\partial_yv(x,y),$$
where
\begin{equation}\label{c}
c_\alpha=\frac{2^{1-2\alpha}\Gamma(1-\alpha)}{\Gamma(\alpha)}.
\end{equation}

We recall now the explicit expression (see Lemma 3.4 in \cite{BCdP}) for $\mathcal E_\alpha(u)$ in terms of the spectral decomposition \eqref{spect}.
\begin{lemma}[Lemma 3.4 in \cite{BCdP}]\label{spect-ext}
Let $\{\mu_k,\zeta_k\}$ be, as before, the eigenvalues and eigenfunctions of $-\Delta$ in $\Omega$ (with zero Dirichlet boundary condition). Let $u=\sum_{k=1}^\infty a_k \zeta_k$ be such that $\sum_{k=1}^\infty a_k \mu_k^\alpha < \infty$. Then, the $\alpha$-harmonic extension of $u$ is given by
$$v(x,y)=\mathcal E_\alpha(u)(x,y)=\sum_{k=1}^\infty a_k \zeta_k(x) \varphi(\mu_k^{\frac{1}{2}}y),$$
where $\varphi$ is a solution of the problem
\begin{equation}\label{ODE}
\begin{cases}
\varphi''+\frac{1-2\alpha}{y}\varphi' -\varphi=0 & \mbox{for}\;\;y>0\\
-\lim_{y\rightarrow 0}y^{1-2\alpha}\varphi '(y)=c_\alpha\\
\varphi(0)=1.
\end{cases}
\end{equation}
\end{lemma}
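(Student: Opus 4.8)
The plan is to use separation of variables in the Dirichlet eigenbasis $\{\zeta_k\}$ of $-\Delta$ on $\Omega$. Since $\{\zeta_k\}$ is a complete orthonormal system in $L^2(\Omega)$ with $\zeta_k|_{\partial\Omega}=0$, I would look for the $\alpha$-harmonic extension in the form $v(x,y)=\sum_k c_k(y)\zeta_k(x)$. Feeding this ansatz into $\mathrm{div}(y^{1-2\alpha}\nabla v)=0$ and using $-\Delta\zeta_k=\mu_k\zeta_k$, one finds that each coefficient must solve $(y^{1-2\alpha}c_k')'=\mu_k\,y^{1-2\alpha}c_k$ on $(0,\infty)$, with $c_k(0)=a_k$ and $c_k$ bounded as $y\to+\infty$. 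A short computation with the rescaling $t=\mu_k^{1/2}y$ shows that $y\mapsto\varphi(\mu_k^{1/2}y)$ solves this ODE exactly when $\varphi$ satisfies the first line of \eqref{ODE}; together with $\varphi(0)=1$ this singles out $c_k(y)=a_k\,\varphi(\mu_k^{1/2}y)$ and yields the candidate $v(x,y)=\sum_k a_k\,\zeta_k(x)\,\varphi(\mu_k^{1/2}y)$.

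Next I would verify directly that this $v$ solves \eqref{Dir-ext}: term by term, $a_k\zeta_k(x)\varphi(\mu_k^{1/2}y)$ is annihilated by $\mathrm{div}(y^{1-2\alpha}\nabla\cdot)$ in $\mathcal C_\Omega$ by the computation above, it vanishes on $\partial_L\mathcal C_\Omega$ since $\zeta_k$ does, and its trace on $\Omega\times\{y=0\}$ is $a_k\zeta_k(x)$ since $\varphi(0)=1$. So, once the series is shown to converge appropriately, $v$ is an $\alpha$-harmonic extension of $u=\sum_k a_k\zeta_k$ and hence equals $\mathcal E_\alpha(u)$ by uniqueness of the solution of \eqref{Dir-ext} in the natural weighted energy space.

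The quantitative core is the convergence of the series in $H^1\bigl(\mathcal C_\Omega;\,y^{1-2\alpha}\,dx\,dy\bigr)$. Using $\int_\Omega|\nabla\zeta_k|^2=\mu_k$, $\int_\Omega\zeta_k^2=1$ and the substitution $t=\mu_k^{1/2}y$, I would compute the weighted Dirichlet energy of the $k$-th mode,
\begin{equation}\label{mode-energy}
\int_0^{\infty}\!\!\int_\Omega y^{1-2\alpha}\bigl|\nabla\bigl(a_k\zeta_k\varphi(\mu_k^{1/2}y)\bigr)\bigr|^2\,dx\,dy=C\,a_k^2\,\mu_k^{\alpha},\qquad C:=\int_0^{\infty}t^{1-2\alpha}\bigl(\varphi(t)^2+\varphi'(t)^2\bigr)\,dt,
\end{equation}
so that, granting $C<\infty$, the summability hypothesis of the statement makes the partial sums Cauchy in this weighted Sobolev space; passing to the limit in the weak formulation of \eqref{Dir-ext} then finishes the argument.

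The hard part is the finiteness of $C$ in \eqref{mode-energy}, which depends on the behavior of $\varphi$. Near $t=0$ one has $\varphi(0)=1$ and, from the Neumann normalization in \eqref{ODE}, $\varphi'(t)\sim-c_\alpha\,t^{2\alpha-1}$, so $t^{1-2\alpha}(\varphi^2+\varphi'^2)$ is integrable at the origin precisely because $0<\alpha<1$; at infinity $\varphi$ and $\varphi'$ decay exponentially, which becomes transparent after the substitution $\varphi(t)=t^{\alpha}\psi(t)$ turns the first line of \eqref{ODE} into the modified Bessel equation $t^2\psi''+t\psi'-(t^2+\alpha^2)\psi=0$, whose solution decaying at $+\infty$ is a multiple of $K_\alpha$, i.e.\ $\varphi(t)=\tfrac{2^{1-\alpha}}{\Gamma(\alpha)}\,t^{\alpha}K_\alpha(t)$ after imposing $\varphi(0)=1$ (the same representation forces the Neumann constant in \eqref{ODE} to be the $c_\alpha$ of \eqref{c}). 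With these properties of $\varphi$ in hand, the termwise identities and the weighted-$H^1$ limit are routine, and the lemma follows.
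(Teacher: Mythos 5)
Your argument is correct and is the standard separation-of-variables proof: the paper does not prove this lemma but quotes it from \cite{BCdP}, and your route (mode equation $(y^{1-2\alpha}c_k')'=\mu_k y^{1-2\alpha}c_k$, rescaling to \eqref{ODE}, the per-mode energy $C\,a_k^2\mu_k^{\alpha}$ with $C<\infty$ from the $t^{\alpha}K_\alpha(t)$ asymptotics at $0$ and at $\infty$, then uniqueness for \eqref{Dir-ext}) is exactly the intended argument. The only caveat is that your Cauchy-in-$H^1(y^{1-2\alpha})$ step really uses $\sum_k a_k^2\mu_k^{\alpha}<\infty$, which is the correct hypothesis in \cite{BCdP}; the condition $\sum_k a_k\mu_k^{\alpha}<\infty$ as transcribed in the statement is a typo and does not by itself control the squared sums.
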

The solution $\varphi$ coincides with the solution of the following problem
$$\varphi''+\frac{1-2\alpha}{y}\varphi' -\varphi=0,\quad \varphi(0)=1,\quad \lim_{y\rightarrow \infty}\varphi(y)=0,$$
and minimizes the functional
$$\int_0^\infty y^{1-2\al}\big( |\varphi(y)|^2+ |\varphi'(y)|^2\big) dy.$$
Moreover, it is a combination of Bessel functions, as shown in the following lemma.
%
\begin{lemma}[Lemma 2.2 in \cite{BGS}]\label{autofun1}
The solution of the ODE
\begin{equation}\label{eqy}
\varphi''+\frac{1-2\al}{y}\varphi' -\varphi=0\\
\end{equation}
may be written as $\varphi(y)=y^\al\psi(y)$, where $\psi$ solves the well known Bessel equation
\begin{equation}\label{bessel}
y^2\psi''+y\psi'-(y^2+\al^2)\psi=0.
\end{equation}
In addition \eqref{bessel} has two linearly independent solutions, $I_\al$, $Z_\al$, which are the modified Bessel functions; their asymptotic behaviour is given precisely by
\begin{align}\label{asym}
&I_\al(y)\sim\frac{1}{\Gamma(\al+1)}\left(\frac{y}{2}\right)^\al\left( 1+\frac{y^2}{4(\al+1)}+\frac{y^4}{32(\al+1)(\al+2)}+...\right).\nonumber\\
&Z_\al(y)\sim\frac{\Gamma(\al)}{2}\left(\frac{2}{y}\right)^\alpha\left( 1+\frac{y^2}{4(1-\al)}+\frac{y^4}{32(1-\al)(2-\al)}+...\right)+\nonumber\\
&\hspace{4em} + \frac{\Gamma(-\al)}{2^\al}\left(\frac{y}{2}\right)^\al\left( 1+\frac{y^2}{4(\al+1)}+\frac{y^4}{32(\al+1)(\al+2)}+...\right),
\end{align}
for $y\rightarrow 0^+$, $\al \notin \mathbb Z$. And when $y \rightarrow +\infty$,
\begin{align}\label{asym-infty}
I_\al(y)\sim \frac{1}{\sqrt{2\pi y}}e^y \left( 1-\frac{4\al^2-1}{8y}+\frac{(4y^2-1)(4y^2-9)}{2!(8y)^2}+...\right),\nonumber \\
Z_\al(y)\sim\sqrt{\frac{\pi}{2y}}e^{-y} \left( 1-\frac{4\al^2-1}{8y}+\frac{(4y^2-1)(4y^2-9)}{2!(8y)^2}+...\right).
\end{align}
\end{lemma}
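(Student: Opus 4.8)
\emph{Overview.} The proof has two ingredients: an explicit invertible change of the unknown function that turns \eqref{eqy} into the modified Bessel equation, and then an appeal to the classical theory of Bessel functions for the fundamental system and its asymptotic behaviour.

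\emph{Step 1: reduction to the Bessel equation.} Since we only consider $y>0$, the substitution $\varphi(y)=y^\al\psi(y)$ is invertible, with inverse $\psi(y)=y^{-\al}\varphi(y)$, so it suffices to show that $\varphi$ solves \eqref{eqy} if and only if $\psi$ solves \eqref{bessel}. Differentiating,
$$\varphi'=y^\al\psi'+\al y^{\al-1}\psi,\qquad \varphi''=y^\al\psi''+2\al y^{\al-1}\psi'+\al(\al-1)y^{\al-2}\psi.$$
Inserting these into $\varphi''+\frac{1-2\al}{y}\varphi'-\varphi=0$ and grouping terms by the power of $y$ multiplying $\psi''$, $\psi'$, $\psi$, the coefficient of $y^{\al-2}\psi$ is $\al(\al-1)+(1-2\al)\al=-\al^2$, the coefficient of $y^{\al-1}\psi'$ is $2\al+(1-2\al)=1$, the coefficient of $y^\al\psi''$ is $1$, and the remaining term is $-y^\al\psi$. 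Multiplying the resulting identity by $y^{2-\al}$ gives exactly $y^2\psi''+y\psi'-(y^2+\al^2)\psi=0$, i.e.\ \eqref{bessel}. The computation is reversible, so the equivalence holds.

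\emph{Step 2: fundamental system and asymptotics.} Equation \eqref{bessel} is the modified Bessel equation of order $\al$. It has a regular singular point at $y=0$ whose indicial equation is $r^2-\al^2=0$, with roots $r=\pm\al$; since $0<\al<1/2$ here, these roots do not differ by an integer, so the Frobenius method yields two linearly independent solutions, one of the form $y^\al$ times an entire function of $y^2$ and one of the form $y^{-\al}$ times an entire function of $y^2$. After normalization these are the modified Bessel functions $I_{\pm\al}$, and for a fundamental system one may take $I_\al$ together with a second independent solution $Z_\al$ obtained as a suitable linear combination of $I_{\pm\al}$ (essentially the modified Bessel function of the second kind). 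The small-$y$ expansions \eqref{asym} are read off from the Frobenius series
$$I_\al(y)=\sum_{k=0}^\infty\frac{1}{k!\,\Gamma(k+\al+1)}\left(\frac{y}{2}\right)^{2k+\al}$$
and the analogous series for the second solution, while the large-$y$ expansions \eqref{asym-infty} are the classical Hankel asymptotic expansions associated with the irregular singular point of \eqref{bessel} at $y=+\infty$; see \cite{BGS} and the standard references on Bessel functions cited therein.

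\emph{Main point.} There is no substantial obstacle: Step 1 is a finite computation and Step 2 is classical ODE theory. The only places requiring care are the bookkeeping in Step 1 (checking the cancellations producing the coefficients $-\al^2$ and $1$) and the remark that the hypothesis $0<\al<1/2$ ensures $\al\notin\Z$, so that no logarithmic term enters the Frobenius expansion and the two solutions are genuinely of the stated power-type form near the origin.
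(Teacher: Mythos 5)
Your proof is correct: the Step 1 substitution checks out (the coefficients $-\al^2$, $1$, $1$ and the term $-y^\al\psi$ combine, after multiplying by $y^{2-\al}$, into exactly \eqref{bessel}), and Step 2 is the classical Frobenius/Hankel theory of the modified Bessel equation. The paper gives no proof of this lemma at all — it is quoted from Lemma 2.2 of \cite{BGS} — so your argument is precisely the standard one the citation stands in for; the only tiny mismatch is that you assume $0<\al<1/2$ whereas the lemma is stated for all $\al\notin\Z$, but the same reduction and the classical Bessel asymptotics cover that general case.
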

In the sequel, we will use both type of solutions given in Lemma \ref{autofun1} above (one is growing exponentially as $y\rightarrow \infty$, one is decaying exponentially to $0$). Up to a normalization constant chosen in such a way that $\varphi(0)=1$, we set 
\begin{equation}\label{phi}
\varphi_1(y):=y^\alpha I_\alpha(y)\quad \mbox{and}\quad \varphi_2(y):=y^\alpha Z_\alpha(y).
\end{equation}
\begin{remark}\label{autofun-mu}
We observe that if $\varphi$ satisfies \eqref{eqy}, then the function $\bar\varphi(y):=\varphi(\mu y)$ is a solution of
$$\partial_y(y^{1-2\al} \partial_y \bar\varphi)=y^{1-2\al}\mu^2 \bar \varphi.$$
\end{remark}
\begin{remark}
As said before, $\varphi_2$ is the solution of \eqref{ODE} and in particular it satisfies
\begin{equation}\label{lim-phi0}
\lim_{y\rightarrow 0} \varphi_2(y)=1,
\end{equation}
\begin{equation}\label{lim-phi1}
-\lim_{y\rightarrow 0} y^{1-2\al}\partial_y \varphi_2(y)=	c_\al,
\end{equation}
where $c_\al$ is defined in \eqref{c}.

Moreover, by \eqref{asym-infty} we have
\begin{equation}\label{lim-phi-infty}
\varphi_2(y) \sim y^{\alpha-1/2} e^{-y}\quad \mbox{as}\;\;y\rightarrow \infty.
\end{equation}
%
\end{remark}

\section{The 1-dimensional solution}
In this section we study the following one-dimensional fractional Dirichlet problem:
\begin{equation}\label{AC-1d}
\begin{cases}
(-\partial_{ss})^\al u=F'(u)\quad \mbox{in}\;[0,\la]\\
u(0)=u(\la)=0.
\end{cases}
\end{equation}
In order to do that, we consider the extended problem with $0$-Dirichlet boundary condition on the lateral boundary of the strip $[0,\la]\times (0,+\infty)$. We denote by $(s,y)$ a point in $[0,\la]\times (0,+\infty)$. More precisely, we study existence of nontrivial solutions to the problem
\begin{equation}\label{AC2-1d}
\begin{cases}
\mbox{div}\left(y^{1-2\al} \nabla v\right)=0& \mbox{in}\;[0,\la]\times (0,+\infty)\\
v=0& \mbox{in}\;\partial[0,\la]\times (0,+\infty)\\
- \frac{1}{c_\alpha}\lim_{y\rightarrow 0}y^{1-2\al}\partial_{y}v=F'(v) &\text{on}\; [0,\la]\times \{0\}.
\end{cases}
\end{equation}
The energy functional associated to problem \eqref{AC2-1d} is given by
\begin{equation}\label{en-1d}
E_0(v):=\frac{1}{c_\al}\int_0^{+\infty}\int_0^\la \frac{1}{2}y^{1-2\al}|\nabla v(s,y)|^2ds dy + \int_0^\la F(v(s,0))ds.
\end{equation}
%
%

%
In the following lemma, we give a sufficient and necessary condition on the parameter $\lambda$ for existence of nontrivial solutions to problem \eqref{AC2-1d}.
\begin{lemma}\label{existence-1d}
Let $\lambda_*$ be defined as in \eqref{lambda} and assume that $\lambda>\lambda_*$ is fixed. Then, there exists a nontrivial positive solution of \eqref{AC2-1d} which is a minimizer of $E_0$. Assume that $\lambda \leq \lambda_*$, then there are no positive solutions of \eqref{AC2-1d} and the trivial solution $0$ is the unique minimizer of $E_0$.
\end{lemma}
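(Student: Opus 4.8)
The plan is to treat the two regimes separately, using in both cases the spectral/extension machinery from Section 2 with $\Omega=(0,\lambda)$.

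For the non-existence half, suppose $v\geq 0$ solves \eqref{AC2-1d} and is not identically zero; let $u=v(\cdot,0)$. Testing the equation against $\zeta_1$, the first Dirichlet eigenfunction of $-\partial_{ss}$ on $(0,\lambda)$ (which is positive, equal to $\sin(\pi s/\lambda)$ up to normalization, with eigenvalue $\mu_1=\pi^2/\lambda^2$), I would integrate by parts in the cylinder: multiplying $\mathrm{div}(y^{1-2\al}\nabla v)=0$ by the Dirichlet extension $\mathcal E_\alpha(\zeta_1)=\zeta_1(s)\varphi_2(\mu_1^{1/2}y)$ and integrating over $(0,\lambda)\times(0,\infty)$, the lateral boundary term vanishes because both functions vanish there, and the $y=0$ boundary terms produce $c_\alpha\int_0^\lambda(-\partial_{ss})^\alpha u\,\zeta_1 - c_\alpha\int_0^\lambda u\,(-\partial_{ss})^\alpha\zeta_1 = 0$ by self-adjointness of the spectral fractional Laplacian. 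Hence $\int_0^\lambda F'(u)\zeta_1 = \int_0^\lambda \mu_1^\alpha u\,\zeta_1$. Now use hypothesis \eqref{hp-f}: since $u\geq 0$, $F'(u)\geq F''(0)u$, so $\int_0^\lambda\big(F''(0)u-\mu_1^\alpha u\big)\zeta_1\leq 0$, i.e. $\big(F''(0)-\mu_1^\alpha\big)\int_0^\lambda u\,\zeta_1\leq 0$. But $F''(0)<0<\mu_1^\alpha$, so the coefficient is strictly negative while $\int_0^\lambda u\,\zeta_1>0$ (as $u\geq 0$, $u\not\equiv 0$, $\zeta_1>0$), a contradiction — \emph{unless} the condition $\lambda\leq\lambda_*$ is exactly what makes $\mu_1^\alpha=(\pi^2/\lambda^2)^\alpha=(\pi/\lambda)^{2\alpha}\geq (-F''(0))$, i.e. $F''(0)-\mu_1^\alpha<0$; indeed $\lambda\leq\lambda_*$ is equivalent to $(\pi/\lambda)^{2\alpha}\geq -F''(0)$ by \eqref{lambda}, so under $\lambda\leq\lambda_*$ we get $\mu_1^\alpha \geq -F''(0) > F''(0)$ and the contradiction goes through. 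For the uniqueness of the trivial minimizer when $\lambda\leq\lambda_*$, I would show $E_0(v)\geq E_0(0)=\lambda\,F(0)$ — actually it is cleaner to subtract: writing $\tilde F(t)=F(t)-F(0)$ one has $\tilde F(t)\geq \frac12 F''(0)t^2$ is false in general, so instead I would argue at the level of critical points: any minimizer is a nonnegative-or-nonpositive solution after the usual truncation/odd-even reflection argument, and then the above non-existence of positive solutions (applied to $|v|$, which is a subsolution in the appropriate sense, or directly to $v$ after noting a minimizer has a sign) forces $v\equiv 0$.

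For the existence half when $\lambda>\lambda_*$, I would minimize $E_0$ over the natural weighted Sobolev space $H=\{v: v=0 \text{ on } \partial(0,\lambda)\times(0,\infty),\ \int\!\!\int y^{1-2\al}|\nabla v|^2<\infty\}$, restricting to $v\geq 0$. The functional is bounded below since $F\geq F(\pm1)\geq 0$ and is coercive modulo the boundary term; the Dirichlet energy part is weakly lower semicontinuous, and the trace term $\int_0^\lambda F(v(s,0))\,ds$ is weakly continuous via the compact trace embedding on the bounded interval $(0,\lambda)$ (compactness in the $s$-direction is what saves us, exactly as in \cite{BCdP,Cinti}). So a minimizer $v_0\geq 0$ exists; it solves \eqref{AC2-1d} by the Euler–Lagrange equation, and $v_0\geq 0$ can be kept because replacing $v$ by $|v|$ does not increase $E_0$ (using that $F$ is even and $|\nabla|v||=|\nabla v|$ a.e.). The crucial point is that $v_0\not\equiv 0$: here I would test $E_0$ on the competitor $v_\eps(s,y)=\eps\,\zeta_1(s)\,\varphi_2(\mu_1^{1/2}y)$, compute $E_0(v_\eps)=\eps^2\big(\tfrac{1}{2c_\al}\mu_1^\alpha\|\zeta_1\|_{L^2}^2 + \tfrac12 F''(0)\|\zeta_1\|_{L^2}^2\big)+\lambda F(0)+o(\eps^2)$ using \eqref{lim-phi0}–\eqref{lim-phi1} to identify the Dirichlet energy of the extension as $c_\al^{-1}\mu_1^\alpha\|\zeta_1\|_{L^2}^2$ times $\tfrac12\eps^2$ and expanding $F(\eps\zeta_1)=F(0)+\tfrac12 F''(0)\eps^2\zeta_1^2+o(\eps^2)$. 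The bracket is $\tfrac12\big(c_\al^{-1}\mu_1^\alpha + F''(0)\big)\|\zeta_1\|_{L^2}^2$; wait — the relative weights of the two terms differ by $c_\al$, so more carefully $E_0(v_\eps)-\lambda F(0)=\tfrac{\eps^2}{2}\|\zeta_1\|_{L^2}^2\big(\tfrac{1}{c_\al}\mu_1^\alpha + F''(0)\big)$, and the sign is governed by $\mu_1^\alpha + c_\al F''(0)$; to match \eqref{lambda} I would instead normalize $\varphi$ so that the extension energy equals $\mu_1^\alpha\|\zeta_1\|^2$ exactly (which is the content of the spectral definition \eqref{spect}: $\int (-\partial_{ss})^\alpha u\cdot u = \sum a_k^2\mu_k^\alpha$, matching the extension energy up to the constant $c_\al$ built into \eqref{en-1d}). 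The upshot is that $\lambda>\lambda_*\iff \mu_1^\alpha<-F''(0)\iff$ this second variation is strictly negative, so $E_0(v_\eps)<\lambda F(0)=E_0(0)$ for small $\eps$, hence the minimizer is nontrivial.

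The main obstacle I anticipate is bookkeeping the constant $c_\alpha$ correctly so that the threshold coming out of the second-variation computation is \emph{exactly} $\lambda_*$ as defined in \eqref{lambda}, and not $\lambda_*$ times a spurious power of $c_\alpha$: this forces one to use the spectral characterization of the extension energy (Lemma \ref{spect-ext}, with $\varphi_2$ as in \eqref{phi}) rather than the raw ODE normalization, matching $\tfrac{1}{c_\al}\int y^{1-2\al}|\nabla \mathcal E_\al(\zeta_k)|^2 = \mu_k^\alpha\|\zeta_k\|_{L^2}^2$. A secondary technical point is justifying the integration by parts in the unbounded cylinder in the non-existence argument; this is handled by the exponential decay \eqref{lim-phi-infty} of $\varphi_2$ together with a Caccioppoli-type bound showing $v$ itself decays, so all boundary-at-infinity terms vanish.
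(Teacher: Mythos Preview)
Your approach is exactly the paper's: use the extended first Dirichlet eigenfunction $w(s,y)=\sin(\pi s/\lambda)\,\varphi_2(\pi y/\lambda)$ both as a competitor $\eps w$ to show $E_0(\eps w)<E_0(0)$ when $\lambda>\lambda_*$, and as a multiplier in the equation to rule out positive solutions when $\lambda\le\lambda_*$. The paper does the explicit computation of the Dirichlet energy of $w$ (your anticipated $c_\alpha$--bookkeeping obstacle) in closed form, confirming that the second--variation coefficient is $\tfrac{\lambda}{4}\big((\pi/\lambda)^{2\alpha}+F''(0)\big)$ and hence that the threshold is precisely $\lambda_*$ with no spurious $c_\alpha$.

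There is, however, a genuine gap in your nonexistence argument as written. From the identity you derive, $\int_0^\lambda F'(u)\zeta_1=\mu_1^\alpha\int_0^\lambda u\zeta_1$, together with $F'(u)\ge F''(0)u$, you obtain $\big(F''(0)-\mu_1^\alpha\big)\int_0^\lambda u\zeta_1\le 0$; but since $F''(0)-\mu_1^\alpha<0$ for \emph{every} $\lambda>0$ and $\int u\zeta_1>0$, this inequality is trivially satisfied and yields no contradiction --- as you half--notice in the ``a contradiction --- \emph{unless}'' clause. The paper's computation instead produces
\[
\int_0^\lambda w(s,0)\Big(F'(v)+\big(\tfrac{\pi}{\lambda}\big)^{2\alpha} v\Big)\,ds=0,
\]
with the opposite relative sign. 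Then $F'(v)+\mu_1^\alpha v\ge\big(F''(0)+\mu_1^\alpha\big)v$, and the point is that $\lambda\le\lambda_*$ is \emph{equivalent} to $F''(0)+\mu_1^\alpha\ge 0$, so the integrand is nonnegative, $w>0$, and the vanishing of the integral forces $v\equiv 0$. You should recheck the sign coming out of the two integrations by parts (equivalently, the sign in the Euler--Lagrange equation of $E_0$); once this is corrected, your argument coincides with the paper's.
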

\begin{proof}
We consider the function
$$w(s,y):=\sin\left(\frac{\pi}{\lambda}s\right)\varphi_2\left(\frac{\pi}{\lambda}y\right),$$ where as before $\varphi_2$ is the solution of \eqref{ODE}. By Remark \ref{autofun-mu} and by  \eqref{lim-phi0} and \eqref{lim-phi1} $w$ satisfies the problem
\begin{equation}\label{eq-w}
\begin{cases}
\mbox{div}\left(y^{1-2\al}\nabla w\right)=0 &\mbox{in} \;[0,\lambda]\times (0,+\infty)\\
w=0 &\mbox{on}\;\partial[0,\lambda]\times (0,+\infty)\\
-\frac{1}{c_\alpha}\lim_{y\rightarrow 0}y^{1-2\al}\partial_y w=\left(\frac{\pi}{\la}\right)^{2\al}w & \mbox{on}\;[0,\lambda]\times \{y=0\}.
\end{cases}
\end{equation}
We use a small multiple of $w$ as a test function to prove that $0$ is not a minimizer when $\lambda>\lambda_*$. First of all, we observe that
$$E_0(0)=\lambda F(0).$$
On the other hand, using Taylor expansion for $F$, we have that
\begin{eqnarray*}
E_0(\eps w)&=&\lambda F(0)+\frac{\eps^2}{2c_\al}\int_0^{+\infty} \int_0^\lambda y^{1-2\al}|\nabla w(s,y)|^2 ds dy \\
&&\hspace{1em}+\frac{\varepsilon^2}{2}\int_0^\la F''(0)w(s,0)^2 ds + \mathcal O(\eps^4).
\end{eqnarray*}
We first observe that
\begin{equation}\label{potential}
\int_0^\lambda F''(0)w(s,0)^2=F''(0)\int_0^\lambda \sin^2\left(\frac{\pi}{\lambda}s \right) ds =\frac{\lambda}{2}F''(0).
\end{equation}
To compute the Dirichlet energy, we use the change of variable $\bar y=\frac{\pi}{\la} y$ and we integrate by parts in $\bar y$ to get
\begin{equation}\label{Dirichlet}
\begin{split}
&\frac{1}{2c_\al}\int_0^{+\infty}\int_0^\la y^{1-2\al}|\nabla w(s,y)|^2ds dy=\\
&\hspace{1em}=\frac{1}{2c_\al} \int_0^{+\infty} \int_0^\la y^{1-2\al}\left( \frac{\pi}{\la}\right)^{2}\left[\cos^2\left(\frac{\pi}{\la}s\right)\varphi_2^2\left(\frac{\pi}{\la}y\right)+
\sin^2\left(\frac{\pi}{\la}s\right)\dot{\varphi_2}^2\left(\frac{\pi}{\la}y\right)\right]dsdy\\
&\hspace{1em}=\frac{1}{c_\al}\frac{\la}{4}\left(\frac{\pi}{\la}\right)^{2\al} \int_0^{+\infty}\bar y^{1-2\al}\left[\varphi_2^2(\bar y) +\dot{\varphi_2}^2(\bar y)\right]d\bar y\\
&\hspace{1em}=\frac{1}{c_\al}\frac{\la}{4}\left(\frac{\pi}{\la}\right)^{2\al} \int_0^{+\infty}\left(\bar y^{1-2\al} \varphi_2^2(\bar y)-\varphi_2(\bar y)\partial_{\bar y}\left(\bar y^{1-2\al}\partial_{\bar y} \varphi_2(\bar y)\right)\right) d\bar y \\
 &\hspace{2em}-\frac{1}{c_\al}\frac{\la}{4}\left(\frac{\pi}{\la}\right)^{2\al} \lim_{\bar y\rightarrow 0}\bar y^{1-2\al} \varphi_2(\bar y)\dot{\varphi_2}(\bar y)\\
&\hspace{1em}=\frac{\la}{4}\left(\frac{\pi}{\la}\right)^{2\al},
\end{split}
\end{equation}
where in the last equality we have used that $\varphi_2$ satisfies \eqref{eqy} and the asymptotic behaviours  \eqref{lim-phi0} and \eqref{lim-phi1}.

Therefore, combining together \eqref{potential} and \eqref{Dirichlet}, we deduce that
\begin{equation*}
\begin{split}
E_0(\eps w)&=\lambda F(0) +\frac{\eps^2}{4}\la \left (\left(\frac{\pi}{\la}\right)^{2\al}+F''(0)\right) + \mathcal O(\eps^4)\\
& <E_0(0),
\end{split}
\end{equation*}
for $\eps$ small enough, provided that $\la>\la_*$. This concludes the proof of the first part of the statement, since we get a nontrivial minimizer for $E_0$, which can be chosen to be positive by standard arguments.

To prove the nonexistence of positive solutions for $\la\leq \la_*$, we multiply the first equation of $\eqref{AC2-1d}$ by $w$ and we integrate by parts, to get
\begin{equation*}
\begin{split}
0&=\frac{1}{2c_\al}\int_0^{+\infty}\int_0^\la \mbox{div}\left(y^{1-2\al}\nabla v(s,y)\right) w(s,y)dsdy \\
&=-\frac{1}{2c_\al}\int_0^{+\infty}\int_0^\la y^{1-2\al} \nabla v(s,y)\cdot\nabla w(s,y) ds dy \\
&\hspace{1em}- \int_0^\la \lim_{y\rightarrow 0} y^{1-2\al}\partial_y v(s,y) w(s,0) ds\\
&= \frac{1}{2c_\al}\int_0^\la \lim_{y\rightarrow 0} y^{1-2\al}\partial_y w(s,y) v(s,y) ds +\int_0^\la F'(v) w(s,0)ds\\
&=\frac{1}{2}\int_0^{\la} w(s,0)\left(F'(v) + \left(\frac{\pi}{\la}\right)^{2\al} v(s,0)\right) ds,
\end{split}
\end{equation*}
where in the last two equalities we have used that $w$ satisfies \eqref{eq-w}.
This concludes the proof of the lemma, indeed by assumpion $F'(v)\geq F''(0) v$ for any $v\geq 0$, and therefore we must have $v(s,0)\equiv 0$ when $\la \leq \la_*$, which implies $v\equiv 0$ by uniqueness of solutions to the extended problem \eqref{Dir-ext}.
\end{proof}
For $\la>\la_*$ we will denote by $v_0$ the nontrivial minimizer of $E_0$, which existence is established in Theorem \ref{existence-1d}. From what we have seen
above, we have the inequality:
\begin{equation}\label{est-1d}
E_0(v_0) < \la F(0).
\end{equation}
\section{The existence result for $\la>\la_*$}

This section is devoted to the proof of Theorem \ref{exist}.

In the following we will use cilindrical coordinates $(r,\theta,s,y)\in [0,+\infty)\times \mathbb S^1\times \R\times \R^+$ to parametrize $\R^3\times \R^+$. In order to find a solution of \eqref{AC} which is invariant under the screw motion $\sigma_\la^\beta$, we will look for a solution $v$ of the extended problem \eqref{AC2} which is invariant under the transformation, that for simplicity of notation we still denote by $\sigma_\la^\beta$, acting on $\R^3\times \R^+$ in the natural way
$$\sigma_\la^\beta(z,s,y)=\left(e^{i\beta}z,s+\frac{\la}{\pi}\beta,y\right).$$
More precisey, we require that
$$v(r,\theta,s,y)=v\left(r,\theta+\beta, s+\frac{\la}{\pi}\beta,y\right)\quad \mbox{for every}\;\;\beta\in \R,$$
which implies that
$$v(r,\theta,s,y)=v(r,\theta, s+2\la,y)\quad \mbox{and}$$
$$v(r,\theta,s,y)=v\left(r,0,s-\frac{\la}{\pi}\theta,y \right).$$
Moreover, we assume that
$$v(r,\theta,-s,y)=-v(r,\theta,s,y).$$
Using these invariances, it is clear that in order to construct the solution $v$, it is enough to construct it for $\theta=0$, that is we just need to know
$$V(r,s,y)=v(r,0,s,y)\quad\mbox{for}\;(r,s,y)\in [0,+\infty)\times [0,\lambda]\times \R^+.$$
Of course, by construction, we have that if $V$ is positive in $(0,+\infty)\times (0,\lambda)\times \R^+$ and vanishes on $\partial \left([0,+\infty)\times [0,\lambda]\right)\times \R^+$, then the zero level set of $v$ is $H_\la \times \R^+$, and therefore the zero level set of its trace $u$ on $\{y=0\}$, that is a solution of \eqref{AC}, is exactly the helicoid $H_\la$.

We need now to write  problem \eqref{AC2} and the energy \eqref{en} in the cilindrical coordinates introduced above for functions invariant under screw motion.
Problem \eqref{AC2} becomes
\begin{equation}\label{eq-cil}
\begin{cases}
V_{rr}+\frac{1}{r}V_r+\left(1+\frac{\la^2}{\pi^2r^2}\right)V_{ss} + V_{yy}+\frac{1-2\al}{y} V_y =0& \text{in}\; [0,+\infty)\times [0,\lambda]\times \R^+,\\
-\frac{1}{c_\al} \lim_{y\rightarrow 0}y^{1-2\al}\partial_{y}V=F'(V)& \text{on}\; \{y=0\}
\end{cases}
\end{equation}

In what follows we will use the following notations:
$$S_{R}:=[0,R]\times [0,\la]\quad \mbox{and}\quad C_{R,L}:=S_R\times [0,L].$$
We define the following subsets of $\partial C_{R,L}$:
$$\partial^+ C_{R,L}:=\overline{\partial C_{R,L}\cap \{y>0\}},$$
$$\partial^0 C_{R,L}:=\partial C_{R,L}\setminus \partial^+ C_{R,L}.$$
The energy associated to problem \eqref{eq-cil} in the cylinder $C_{R,L}$ is given by:
\begin{eqnarray*}
 E(V,C_{R,L})&=&\frac{1}{2c_\alpha}\iint_{C_{R,L}} y^{1-2\al}\left(|V_r|^2+\left(1+\frac{\la^2}{\pi^2r^2}\right)|V_{s}|^2 + |V_y|^2\right) r\: dr\: ds\: dy \\
 &&\hspace{2em}+\int_{\partial^0 C_{R,L}} F(V) r \:dr\: ds.\end{eqnarray*}

Let $H^1(C_{R,L},y^{1-2\al})$ denote the weighted Sobolev space%
$$H^1(C_{R,L},y^{1-2\al})=\{ v: C_{R,L}\rightarrow \R\;|\;y^{1-2\al}(v^2+|\nabla v|^2) \in L^1_r(C_{R,L})\},$$
where $L^1_r$ denotes the space $L^1(C_{R,L})$ with respect to the measure $r \:dr \;ds\; dy$, 
and let $\tilde H_0^1(C_{R,L},y^{1-2\al})$ be the space
$$\tilde H_0^1(C_{R,L},y^{1-2\al})=\{ v\in H^1(C_{R,L},y^{1-2\al})\;|\;|v|\leq 1,\;v \equiv 0 \;\;\mbox{on}\;\;\partial^+C_{R,L}\},$$

We recall that (see the proof of Lemma 4.1 in \cite{CS2}), the inclusion
\begin{equation}\label{compact}
\tilde H^1_0(C_{R,L},y^{1-2\al}) \subset \subset L^2(\partial^0 C_{R,L})
\end{equation}
is compact.

We are now ready to give the prove of Theorem \ref{exist}.

\begin{proof}[Proof of Theorem \ref{exist}]
By  a standard variational argument and using compactness of the inclusion \eqref{compact}, taking a minimizing sequence $\{V^k_{R,L}\}\in \tilde H^1_0(C_{R,L},y^{1-2\al})$ and a subsequence convergent in $L^2(\partial^0 C_{R,L})$, we conclude that $E(\cdot,C_{R,L})$ admits an absolute minimizer $V_{R,L}$ in $\tilde H^1_0(C_{R,L},y^{1-2\al})$. Without loss of generality, by a standard truncation argument, we may assume $0\leq V_{R,L}\leq 1$.

It is easy to check that $V_{R,L}$ is a solution of \eqref{eq-cil} in $C_{R,L}$ (with $0$-Dirichlet boundary condition on $\partial^+ C_{R,L}$). Arguing as in the proof of Theorem 1.3 in \cite{Cinti}, one can prove that $V_{R,L}$ extends to a solution of 
$$
\begin{cases}
\text{div}(y^{1-2\alpha} \nabla v)=0 &\mbox{in}\:\: C_{R,L}\\
v\equiv 0 &\mbox{on}\:\: \partial^+ C_{R,L}\\
-\frac{1}{c_\al}\lim_{y\rightarrow 0}y^{1-2\al}\partial_y v=F'(v)&\mbox{on}\:\: \partial^0 C_{R,L},
\end{cases}
$$
where, for simplicity, we keep the notation $C_{R,L}$ to denote the corresponding cylinder in $\R^4_+$, that is
$$C_{R,L}=\{(x,y)=(r,\theta,s,y)\in \R^4_+\,:\, 0<r<R,\;0<y<L\},$$ and the subsets of its boundary
$$\partial^+ C_{R,L}=\overline{\partial C_{R,L}\cap \R^4_+},\;\; \partial^0 C_{R,L}:=\partial C_{R,L}\setminus \partial^+ C_{R,L}.$$

Some care is needed to show that it is a solution close to $\{r=0\}$, and we refer to \cite{Cinti} for details.
We now wish to pass to the limit in $R$ and $L$, and obtain a
solution in all of $[0,\infty)\times [0,\lambda] \times \R^+$. Let $S>0$, $L'>0$ and consider the family
$\{V_{R,L}\}$ of solutions in $[0,{S+2}]\times [0,\lambda]\times[0,L'+2]$, with
${R>S+2}$ and $L>L'+2$. Since $|V_{R,L}|\leq 1$, regularity results proven in Proposition 4.6 of \cite{CS1} , give a uniform
$C^{2,\alpha}([0,{S}]\times [0,\lambda]\times[0,L'])$ bound for $V_{R,L}$
(uniform with respect to $R$ and $L$). We have
\begin{equation}\label{grad1}
|\nabla V_{R,L}|\leq C \quad\text{ in } [0,{S}]\times [0,\lambda]\times[0,L'],
\qquad\text{for all } R>S+2,\:\:L>L'+2
\end{equation}
for some constant $C$ independent of $S$, $R$, $L$ and $L'$. 
Choose now
$L=R^{b}$, with $1/2<b<1$ (this choice will be used later to prove that the solution that we construct is not identically zero). By the
Ascoli-Arzel\`a Theorem, a subsequence of $\{V_{R,R^{b}}\}$ converges in
$C^2([0,S]\times [0,\lambda]\times [0,S^{b}])$ to a solution in
$([0,S]\times [0,\lambda]\times [0,S^{b}])$. Taking $S=1,2,3,\ldots$ and making a
Cantor diagonal argument, we obtain a sequence $V_{R_j,R_j^{b}}$
converging in $C^2_{{loc}}([0,\infty)\times [0,\lambda]\times \R^+)$ to a solution $V\in
C^2_{{loc}}([0,\infty)\times [0,\lambda]\times \R^+)$. By construction we have found a solution $V$
with $|V|\leq 1$.
%

It only remains to prove that $V$ is positive.

We start by proving that $V$ is not identically zero. In order to do that, following the argument in \cite{Cinti},  we establish an energy estimate for $V$ using a comparison argument, based on the minimality of $V_{R,R^b}$ in the set $C_{R,R^b}$ .

%
%
%

%

Suppose by contradiction that $\la>\la_*$ and that $V\equiv 0$. Then, given $ R>0$, the minimizing sequence $V_{R,R^b}$ converges uniformly to $0$ on $C_{ R, R^b}$ when $R\rightarrow +\infty$. The energy of $0$ in $C_{R,R^b}$ is clearly
$$E(0,C_{ R, R^b})=\frac{\la}{2}F(0)  R^2.$$
Therefore, for any function $W$ vanishing on $\partial^+ C_{ R, R^b}$, we have that
\begin{equation}\label{min-est}
E(W,C_{ R, R^b})\geq E(0,C_{ R, R^b})=\frac{\la}{2}F(0)  R^2.\end{equation}
We build now a suitable competitor $W$ and we arrive to a contradiction with \eqref{min-est}. First we define two smooth cut-off functions $\eta$ and $\xi$ as follows:
$$\eta(r):=\begin{cases} 0 & \mbox{for}\;r\in [0,1/2]\cup [ R-1/2, R]\\
1 &\mbox{for}\;r\in [1, R-1],\end{cases}
$$
and
$$\xi(y)=\begin{cases} 1 & \mbox{if}\:\:0<y\leq  R^{b}- R^{a}\\
\displaystyle \frac{\log { R^b}-\log y}{\log { R^b}-\log {( R^b- R^a)}} & \mbox{if}\:\: R^{b}- R^{a}<y\leq  R^{b}.
                \end{cases}
$$
We set
$$W(r,s,y):=\eta(r)\xi(y)v_0(s,y),$$
where $v_0$ is the minimizer of $E_0$ (see \eqref{en-1d}), whose existence is established in Lemma \ref{existence-1d} and which satisfies \eqref{est-1d}.

We compute now the energy of $W$.
First, we observe that the potential energy is estimated by
\begin{equation}\label{poten}
\int_0^\la \int_0^R F(W(r,s,0)) r dr ds \leq \frac{R^2}{2} \int_0^\la F(v_0(s,0)) ds + \mathcal O (R).
\end{equation}

We estimate now the Dirichlet energy. Since $|\eta|,|\xi|,|v_0|\leq 1$, we have that:
\begin{align}\label{energy}
&\frac{1}{2 c_\alpha}\iint_{C_{R, R^b}} y^{1-2\al}|\nabla W|^2  dx dy \nonumber\\
&\hspace{0.5em} \leq \frac{1}{2 c_\alpha}\iint_{C_{R, R^b}}y^{1-2\al}\left(|\nabla \eta|^2+|\nabla v_0|^2+|\dot{\xi}|^2\right) dx dy. \\
\end{align}
This, together with \eqref{poten}, implies
$$E(W,C_{R,R^b})\leq \frac{R^2}{2} E_0(v_0) + \frac{1}{2 c_\alpha}\iint_{C_{R, R^b}} y^{1-2\al}\left(|\nabla \eta|^2+|\dot{\xi}|^2\right) dx dy + \mathcal O(R).$$
We estimate now the second term on the right-hand side above. In the following, $C_\al$ will denote positive, possibly different, costants depending only on $\al$. Using the definition of $\eta$, we have that
\begin{equation}\label{eta}
\iint_{C_{R, R^b}}y^{1-2\al}|\nabla \eta|^2 dx dy \leq C_\al \la R  \int_0^{ R^{b}} y^{1-2\al} dy=C_\al \la  R^{1+2b(1-\al)}.
\end{equation}
On the other hand, by the definition of $\xi$ and using polar coordinates, we have
\begin{align}\label{xi}
&\int_0^{ R^b}\int_0^\la \int_0^R y^{1-2\al}|\dot \xi (y)|^2 r\: dr ds dy \leq C_\al \la  R^2 \frac{1}{\left(\log{\frac{ R^b}{ R^b- R^a}}\right)^2} \int_{ R^b-  R^a}^{ R^b}\frac{y^{1-2\al}}{y^2}dy\nonumber\\
&\hspace{1em}\leq C_\al \la  R^2 \frac{1}{-\log{\left(1- R^{a-b}\right)^2}} \left[ \frac{1}{ R^{2\al b}- R^{2\al a}}-\frac{1}{ R^{2\al b}}\right]\nonumber\\
&\hspace{1em}\leq C_\al \la  R^2\cdot  R^{2(b-a)}\cdot  R^{-2\al b} =C_\alpha \la  R^{2+2b(1-\al)-2a}.
\end{align}
Therefore, plugging \eqref{poten}, \eqref{eta} and \eqref{xi} into \eqref{energy}, we deduce
\begin{equation}
E(W,C_{ R, R^b})\leq C_\al \la  R^{1+2b(1-\al)}+C_\al \la  R^{2+2b(1-\al)-2a} +\frac{ R^2}{2}E_0(v_0) + \mathcal O(R).
\end{equation}
Now we choose $1/2<a<b<\frac{1}{2(1-\al)}$. With this choice of $b$ and $a$, there exists $\eps=\eps(b)$ such that
$$E(W,C_{ R, R^b})\leq C_\al \la  R^{2-\eps} + \frac{ R^2}{2}E_0(v_0) + \mathcal O(R^{2-\eps}).$$
This, together with \eqref{min-est}, implies
$$\frac{1}{2}F(0)R^2\leq \frac{ R^2}{2}E_0(v_0) + \mathcal O(R).$$
This gives a contradiction, since for $\la>\la_*$ we have that
$$E_0(v_0)<\la F(0),$$
which was established in \eqref{est-1d}.

This implies that $V$ is not identically zero. Since by construction $V\geq 0$, we conclude that $V$ is strictly positive using the Hopf's Lemma.
\end{proof}
\section{The nonexistence result}
%
In this section we give the proof of our nonexistence result Theorem \ref{nonexist}.

In order to do this, we need to establish exponential decay in the $y$-variable of a bounded solution $v$ of \eqref{AC2} which vanishes on $H_\lambda \times \R^+$ and is invariant under screw motion. We stress that in general exponential decay in the $y$ variable for bounded solutions in all the half-space of problem \eqref{AC2} is not true, as one can see in the particular case of the one-dimensional Peierls-Nabarro problem:
$$\begin{cases}
\Delta v=0 &\mbox{in}\;\;\R^2_+\\
\partial_y v(x,0)=\frac{1}{\pi}\sin{(\pi u)}& \mbox{on} \R,
\end{cases}
$$ 
for which an explicit solution is given by $v(x,y)=\frac{2}{\pi}\arctan{\left(\frac{x}{1+y}\right)}$ (see \cite{C-SM} and references therein). On the other hand, as shown in Lemmas \ref{spect-ext} and \ref{autofun1}, the solution of problem \eqref{Dir-ext} in a cylinder $\mathcal C_\Omega=\Omega\times \R^+$, with $\Omega$ bounded, with $0$-Dirichlet condition on the lateral boundary $\partial \Omega \times \R^+$ decays exponentially in $y$.
 In our situation, we are able to prove exponential decay, thanks to the symmetry of the problem and to the invariances of the solution $v$.

\begin{proposition}\label{exp-decay}
Suppose that $v$ is a bounded solution of
$$\begin{cases}
\mbox{div}(y^{1-2\alpha} \nabla v)=0 & \mbox{in}\:\:\R^4_+\\
v(x,0)=u(x),
\end{cases}
$$
which vanishes on $H_\lambda \times \R^+$ and is invariant under screw motion.

Then, there exist a positive constant $K$ such that
\begin{equation}\label{exp}
|v(x,y)|\leq K\varphi_2(y)\sim y^{\alpha-\frac{1}{2}} e^{-y}\quad \mbox{as}\;\;y \rightarrow \infty.
\end{equation}
\end{proposition}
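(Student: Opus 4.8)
The plan is to exploit the invariance of $v$ under screw motion to reduce the problem to a bounded cross-section, where the Dirichlet-type eigenfunction decay of Lemma \ref{spect-ext} applies, and then build an explicit supersolution of the form $K\varphi_2(y)$ times a suitable positive function on the cross-section. Concretely, after passing to cylindrical coordinates $(r,\theta,s,y)$ as in Section 4, the invariance forces $v$ to be determined by $V(r,s,y)=v(r,0,s,y)$ on $[0,\infty)\times[0,\lambda]\times\R^+$, with $V\equiv 0$ on $\partial\bigl([0,\infty)\times[0,\lambda]\bigr)\times\R^+$ (this is exactly where we use that $v$ vanishes on $H_\lambda\times\R^+$: the helicoid becomes the lateral edges $\{s=0\}$ and $\{s=\lambda\}$). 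The equation for $V$ is \eqref{eq-cil}; since $|v|\le M$ is bounded, the Neumann datum $F'(V)$ is bounded, and I would first use the boundedness plus the homogeneous Dirichlet condition on $\{s=0,\lambda\}$ to localize the problem.

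The key step is the comparison argument. I would fix the comparison function
$$
\bar v(r,s,y):= K\,\sin\!\Bigl(\tfrac{\pi}{\lambda}s\Bigr)\,\varphi_2\!\Bigl(\tfrac{\mu}{\lambda}y\Bigr)
$$
for an appropriate constant $\mu\ge\pi$ to be chosen, exactly the building block $w$ from the proof of Lemma \ref{existence-1d} but now allowing a rescaling in $y$ (using Remark \ref{autofun-mu}, $\varphi_2(\mu y/\lambda)$ still solves the relevant degenerate ODE with eigenvalue $(\mu/\lambda)^2$). The point is that $\bar v$ is a positive solution of $\mathrm{div}(y^{1-2\alpha}\nabla\bar v)=0$ in the interior (the extra curvature terms $\tfrac1r V_r$ and $\tfrac{\lambda^2}{\pi^2 r^2}V_{ss}$ in \eqref{eq-cil} act favorably: $V_{ss}<0$ for $\sin(\pi s/\lambda)$, so they help make $\bar v$ a supersolution), vanishes on $\{s=0,\lambda\}$, and on $\{y=0\}$ satisfies $-\tfrac{1}{c_\alpha}\lim_{y\to 0}y^{1-2\alpha}\partial_y\bar v=(\mu/\lambda)^{2\alpha}\bar v$. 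Choosing $K$ large enough that $\bar v\ge v$ on $\{y=0\}$ near $s$ bounded away from the edges — combined with the fact that near the edges $V$ vanishes like $\sin(\pi s/\lambda)$ by the Hopf lemma / boundary regularity — and large enough on $\{y=R\}$ slices, one applies the maximum principle for the operator $\mathrm{div}(y^{1-2\alpha}\nabla\cdot)$ with the Robin/Neumann boundary condition on the region $[0,\infty)\times[0,\lambda]\times\R^+$. I would do this on truncated cylinders $C_{R,L}$ and let $L\to\infty$, using $|v|\le M$ and $\varphi_2\to 0$ to control the far boundary, then let $R\to\infty$. A symmetric argument with $-v$ gives the lower bound, yielding $|v|\le \bar v\le K\varphi_2(y/\lambda)$, and the asymptotic \eqref{lim-phi-infty} converts this into the stated $y^{\alpha-1/2}e^{-y}$ decay (possibly with $\varphi_2(y)$ replaced by $\varphi_2(y/\lambda)$, i.e. up to adjusting the exponential rate, or one takes $\mu=\pi$ at the cost of tracking the $r$-dependence more carefully).

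The main obstacle I expect is verifying the maximum principle on the unbounded cross-section $[0,\infty)\times[0,\lambda]\times\R^+$ with the nonlinear Neumann condition — in particular, controlling the behavior as $r\to\infty$. For fixed $y>0$ one can compare on $\{r=R\}$ using $|v|\le M$ and the fact that $\varphi_2(\mu y/\lambda)$ has a positive lower bound on compact $y$-sets, but to close the argument uniformly in $y$ one needs that the comparison function dominates on the whole lateral face $\{r=R\}\times[0,\lambda]\times\R^+$; this is where boundedness of $v$ together with the exponential decay of $\varphi_2$ in $y$ is used, since for large $y$ both sides are small and for bounded $y$ one takes $K$ large. A second, more technical point is the regularity of $v$ up to the edges $\{r=0\}$ and $\{s=0,\lambda\}$ needed to even apply Hopf's lemma and the strong maximum principle there; I would invoke the regularity theory for the extension problem (as in \cite{CS1,Cinti}) exactly as in the proof of Theorem \ref{exist}, and note that the screw-motion invariance removes any genuine singularity at $\{r=0\}$. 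Once these are in place, the rest is the routine comparison computation already essentially carried out in Lemma \ref{existence-1d}.
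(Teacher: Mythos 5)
Your overall strategy --- reduce to $V(r,s,y)=v(r,0,s,y)$ on $[0,\infty)\times[0,\lambda]\times\R^+$ with Dirichlet data on $\{s=0\}\cup\{s=\lambda\}$, and dominate $V$ by a separated supersolution $K\sin(\pi s/\lambda)\varphi_2(\pi y/\lambda)$ of the cylindrical operator, using the Lipschitz bound on $v$ at $\{y=0\}$ to absorb the trace into $K\sin(\pi s/\lambda)$ --- is exactly the paper's strategy (take $\mu=\pi$: for $\mu>\pi$ the rescaled $\varphi_2$ produces a term $\frac{\mu^2-\pi^2}{\lambda^2}w$ of the wrong sign, which beats $-\frac{1}{r^2}w$ for large $r$, so your function is not a supersolution there). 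However, there is a genuine gap in how you close the comparison at infinity. On the far faces of the truncated cylinders, $\{y=L\}$ and $\{r=R\}$ with $y$ large, your barrier is of size $K\varphi_2(\pi y/\lambda)\to 0$, whereas all you know about $V$ there is $|V|\le M$. Your claim that ``for large $y$ both sides are small'' is precisely the conclusion of the proposition, not a hypothesis, so the argument as written is circular: the pure $\varphi_2$ barrier can never dominate a function that is merely bounded on the unbounded parts of the boundary.

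The paper resolves this by perturbing the barrier with small multiples of \emph{growing} solutions: it uses
$w^\eps=K\sin(\tfrac{\pi}{\lambda}s)\bigl[\varphi_2(\tfrac{\pi}{\lambda}y)+\eps\,\varphi_1(\tfrac{\pi}{\lambda}y)+\eps\,(e^{\frac{\pi r}{2\lambda}}+Ce^{-\frac{\pi r}{2\lambda}})\bigr]$,
where $\varphi_1=y^\alpha I_\alpha$ grows exponentially in $y$ and the last bracket grows exponentially in $r$. For each fixed $\eps>0$ this $w^\eps$ tends to $+\infty$ as $y\to\infty$ or $r\to\infty$, so it dominates the bounded $V$ on the far boundary; one checks (a short computation, using $C>e^2$ to handle small $r$) that $w^\eps$ is still a supersolution in the interior, concludes $V\le w^\eps$, and only then sends $\eps\to0$ to recover the decaying bound $V\le K\sin(\pi s/\lambda)\varphi_2(\pi y/\lambda)$. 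Without some device of this kind your maximum principle on $C_{R,L}$ cannot be closed. A secondary, smaller point: you do not need (and should not try to use) the nonlinear Neumann condition at $\{y=0\}$ in the comparison --- since $F''(0)<0$ the associated Robin coefficient has the wrong sign for a boundary maximum principle; as the paper does, simply choose $K$ so that $w^\eps\ge|V|$ pointwise on $\{y=0\}$ via the estimate $|V(r,s,0)|\le\tilde C\min\{s,\lambda-s\}$, and treat $\{y=0\}$ as Dirichlet boundary for the comparison.
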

\begin{proof}
Using polar coordinates and the invariance of $v$, as done in Section 4, we have that the function $V(r,s,y)=v(r,0,s,y)$ satisfies:
\begin{equation}\label{eq-cil-exp}
\begin{cases}
V_{rr}+\frac{1}{r}V_r+\left(1+\frac{\la^2}{\pi^2r^2}\right)V_{ss} + V_{yy}+\frac{1-2\al}{y} V_y =0& \text{in}\; [0,+\infty)\times [0,\lambda]\times \R^+,\\
V=0& \text{on}\; s\in\{0,\lambda\},\\
V=u& \text{on}\; \{y=0\}.
\end{cases}
\end{equation}
We define now the following function, which provides an upper barrier for $V$. For $C>e^2$, $K$ large enough to be chosen later, and $\eps >0$, we set
\begin{equation}\label{barrier}
w^\eps(r,s,y):= K\sin\left(\frac{\pi}{\lambda} s\right)\cdot\left[ \varphi_2\left(\frac{\pi}{\lambda}y\right) + \eps \varphi_1\left(\frac{\pi}{\lambda}y\right) + \eps \left(e^{\frac{1}{2}\frac{\pi}{\lambda} r} + C e^{-\frac{1}{2}\frac{\pi}{\lambda} r}\right)\right], 
\end{equation}
where $\varphi_1$ and $\varphi_2$ were defined in \eqref{phi}. 
To conlude the proof, it is enough to prove that, for any $\eps >0$ (sufficiently small),
\begin{equation}\label{upper-bound}
V\leq w^\eps.
\end{equation}
Then, the conclusion follows by sending $\eps \rightarrow 0$.

We start by showing that $w_\eps$ satisfies:
\begin{equation}\label{superharmonic}
w^\eps_{rr}+\frac{1}{r}w^\eps_r+\left(1+\frac{\la^2}{\pi^2r^2}\right)w^\eps_{ss} + w^\eps_{yy}+\frac{1-2\al}{y} w^\eps_y \leq0
\end{equation}
By a direct computation, we have that:
\begin{eqnarray}\label{conti-superharm}
&&\hspace{-1em}\frac{1}{K}\left[w^\eps_{rr}+\frac{1}{r}w^\eps_r+\left(1+\frac{\la^2}{\pi^2r^2}\right)w^\eps_{ss} + w^\eps_{yy}+\frac{1-2\al}{y} w^\eps_y\right]\nonumber\\
&&\hspace{1em}= -\frac{3}{4}\eps \left(\frac{\pi}{\lambda}\right)^2\sin\left(\frac{\pi}{\lambda}s\right) \left(e^{\frac{1}{2}\frac{\pi}{\lambda}r} + C e^{-\frac{1}{2}\frac{\pi}{\lambda}r}\right)\nonumber\\
&&\hspace{2em} + \frac{1}{2r}\eps \frac{\pi}{\lambda}\sin\left(\frac{\pi}{\lambda}s\right) \left(e^{\frac{1}{2}\frac{\pi}{\lambda}r} - C e^{-\frac{1}{2}\frac{\pi}{\lambda}r}\right)- \frac{1}{r^2}w^\eps\nonumber\\
&&\hspace{1em}\leq -\frac{3}{4}\eps \left(\frac{\pi}{\lambda}\right)^2\sin\left(\frac{\pi}{\lambda}s\right) \left(e^{\frac{1}{2}\frac{\pi}{\lambda}r} + C e^{-\frac{1}{2}\frac{\pi}{\lambda}r}\right)\nonumber\\
&&\hspace{2em}+\frac{1}{2r}\eps \frac{\pi}{\lambda}\sin\left(\frac{\pi}{\lambda}s\right) \left(e^{\frac{1}{2}\frac{\pi}{\lambda}r} - C e^{-\frac{1}{2}\frac{\pi}{\lambda}r}\right)=A_1+A_2.
\end{eqnarray}
We study now separately the cases $r\geq \frac{\lambda}{\pi}$ and $r < \frac{\lambda}{\pi}$.

If $r\geq \frac{\lambda}{\pi}$, the last term in \eqref{conti-superharm} is bounded above by 
\begin{eqnarray*}
&&A_2\leq \frac{1}{2}\eps \left(\frac{\pi}{\lambda}\right)^2\sin\left(\frac{\pi}{\lambda}s\right) \left(e^{\frac{1}{2}\frac{\pi}{\lambda}r} - C e^{-\frac{1}{2}\frac{\pi}{\lambda}r}\right)\\
&&\hspace{1em}\leq \frac{1}{2}\eps \left(\frac{\pi}{\lambda}\right)^2\sin\left(\frac{\pi}{\lambda}s\right) \left(e^{\frac{1}{2}\frac{\pi}{\lambda}r} + C e^{-\frac{1}{2}\frac{\pi}{\lambda}r}\right).
\end{eqnarray*}
This, combined with \eqref{conti-superharm} implies \eqref{superharmonic} when $r\geq \frac{\lambda}{\pi}$.

When $r< \frac{\lambda}{\pi}$, we immediatley deduce that 
$$e^{\frac{1}{2}\frac{\pi}{\lambda}r} - C e^{-\frac{1}{2}\frac{\pi}{\lambda}r}\leq 0,$$
since we have chosen $C>e^2$ and therefore \eqref{superharmonic} holds.
It remains ti prove that $w^\eps\geq V$ on $\partial \big([0,+\infty)\times [0,\lambda]\times \R^+ \big)$. On the set $\{s=0\}\cup \{s=\lambda\}$, this is easy, since by definition $w^\eps=0=V$ while on $\{r=0\}$ we have $w^\eps \geq 0=V$. When $y\rightarrow \infty$ and $r\rightarrow \infty$, it is also true. Indeed $V$ is bounded and, for any fixed $\eps$, $w^\eps$ can be made arbitrarly large for $y$ and $r$ sufficiently large. To conclude we just have to prove that $w^\eps \geq V$ on $\{y=0\}$. On this part of the boundary, we have for $\eps$ sufficienlty small
$$w^\eps(r,s,0)\geq \frac{K}{2}\sin\left(\frac{\pi}{\lambda}s\right).$$
By Proposition 4.6 in \cite{CS1}, we know that $v$ has bounded gradient, in particular $v_s=V_s$ si bounded.
Therefore, since $V=0$ on $\{s=0\}\cup \{s=\lambda\}$, we deduce that, there exists a constant $\tilde C$ such that
$$|V(r,s,0)|\leq \tilde C \min\{s, \lambda -s\}.$$
To conclude, we observe that it is possible to choose $K$ sufficiently large, so that
$$w^\eps(r,s,0)\geq \frac{K}{2}\sin\left(\frac{\pi}{\lambda}s\right)\geq \tilde C \min\{s, \lambda -s\}\geq |V(r,s,0)|.$$

We have proven that $w^\eps$ is an upper barrier for $V$ and therefore \eqref{upper-bound} holds. This concludes the proof of the proposition.
\end{proof}
We can now prove our non-existence result.
\begin{proof}[Proof of Theorem \ref{nonexist}]
We start by observing that, by uniqueness of solutions to problem \eqref{Dir-ext} and since the operator $\mbox{div}(y^{1-2\al} \nabla)$ is invariant under the screw motion $\sigma_\la^\beta$, in order to prove Theorem \ref{nonexist}, it is enough to prove the corresponding non-existence result for the extended problem \eqref{AC2}.
We write $V(r,s,y)=v(r,0,s,y)$ and we consider problem \eqref{eq-cil} written in cylindrical coordinates. Let $\eta$ be a cut-off function only depending on $r$ such that
$$\eta(r)=\begin{cases} 1 & r\leq R\\
0 & r\geq 2R
\end{cases} \quad \mbox{and}\quad |\nabla \eta|\leq \frac{C}{R}.
$$
 We multiply \eqref{eq-cil} by $V\eta^2$ and we integrate by parts,  to obtain
 \begin{eqnarray}\label{nonex-ineq1}
 &&\int_0^{+\infty}\int y^{1-2\alpha}\left( |V_r|^2+\left(1 +\frac{\lambda^2}{\pi^2 r^2}\right) |V_s|^2+ |V_y|^2\right)\eta^2\:r\:dr\:ds\:dy\nonumber \\
 &&\hspace{1em} + \int F'(V)V\eta^2 \:r\:dr\:ds = 2\int_0^{+\infty}\int y^{1-2\alpha} V\: V_r\: \eta\:\eta_r\:r \:dr\:ds\:dy,
 \end{eqnarray}
where the domain of integration is $[0,\infty)\times [0,\lambda]$ where it is not explicitely written.
Now we use the assumption $F'(t)\:t\geq F''(0)\:t^2$ for any $t\in \R$, to get
\begin{eqnarray*}&&\int_0^{\infty} \int_0^\lambda y^{1-2\alpha}\big(|V_s|^2+ |V_y|^2\big)ds dy + \int_0^\lambda F'(V)V ds\\
 &&\hspace{1em} \geq \int_0^{\infty} \int_0^\lambda y^{1-2\alpha}\big(|V_s|^2+ |V_y|^2\big)ds dy + \int_0^\lambda F''(0)V^2 ds.\end{eqnarray*}
Using a truncation argument in the $y$-variable (in order to make $V$ compactly supported in $y$), the exponential decay established in Proposition \eqref{exp-decay} and the fact that $0$ is the unique absolute minimizer for the one-dimensional problem when  $\lambda \leq \lambda_*$, we get
$$
\int_0^{\infty} \int_0^\lambda y^{1-2\alpha}\big(|V_s|^2+ |V_y|^2\big)ds dy + \int_0^\lambda F''(0)V^2 ds \geq 0.
$$
Therefore, from \eqref{nonex-ineq1} we deduce
$$\int_0^\infty \int y^{1-2\alpha} |V_r|^2\eta^2\:r\:dr\:ds\:dy \leq 2\int_0^\infty \int y^{1-2\alpha} V\:V_r\:\eta\:\eta_r\:r\:dr\:ds\:dy.$$
Using Cauchy-Schwarz inequality on the right-hand side we have
\begin{eqnarray}\label{nonex-ineq2}
&&\int_0^\infty \int y^{1-2\alpha} |V_r|^2\eta^2\:r\:dr\:ds\:dy\leq \left(\int_0^\infty \int_{r\in[R,2R]} y^{1-2\alpha} |V_r|^2\eta^2\:r\:dr\:ds\:dy\right)^{\frac{1}{2}}\cdot\nonumber \\
&&\hspace{2em}\cdot \left(\int_0^\infty \int_{r\in[R,2R]} y^{1-2\alpha} |V|^2|\eta_r|^2\:r\:dr\:ds\:dy\right)^{\frac{1}{2}}
\end{eqnarray}
Now, using that $|\nabla \eta|\leq \frac{C}{R}$ and  the exponential decay \eqref{exp} of $V$ in the variable $y$, we deduce that the second integral on the right-hand side is bounded, which implies that the integral
$$\int_0^\infty \int_{r\in[R,2R]} y^{1-2\alpha} |V_r|^2\eta^2\:r\:dr\:ds\:dy$$
is bounded independently of $R$. Letting $R$ tend to infinity, we conclude that
$$\int_0^\infty \int y^{1-2\alpha} |V_r|^2\:r\:dr\:ds\:dy\leq C.$$
In particular, there exists a sequence $R_i\rightarrow \infty$ for which
$$\lim_{i\rightarrow \infty}\int_0^\infty \int_{r\in[R_i,2R_i]} y^{1-2\alpha} |V_r|^2\eta^2\:r\:dr\:ds\:dy=0$$
This, together with \eqref{nonex-ineq2}, implies that
$$\int_0^\infty \int y^{1-2\alpha}|V_r|^2\:r\:dr\:ds\:dy \leq 0,$$
which concludes the proof.
\end{proof}

\section{Proof of Theorem \ref{helicoid}}
We can give the proof of our last result.

\begin{proof}[Proof of Theorem \ref{helicoid}]
We have to show that
$$\mathcal H^{2\al}_{H_\la} (x_0) =0$$
for all $x_0 \in H_\lambda$. We can assume for this calculation that $x_0=(t_0,0,0) = (t_0 e^{i \cdot 0},0)$ where $t_0>0$.

We can write
$$
\R^3 \setminus H_\lambda = E_+ \cup E_-
$$
where
\begin{align*}
E_+ & =
\{ \ (te^{i\theta} ,\frac{\lambda}{\pi}(\theta+z) ) \ | \ t>0, \ \theta\in \R, \ z \in (0,\pi) \ \}
\\
E_- & =
\{ \ (te^{i\theta} , \frac{\lambda}{\pi}(\theta+z) ) \ | \ t<0, \ \theta\in \R, \ z \in (0,\pi) \ \}
\end{align*}
are the two connected components of $\R^3 \setminus H_\lambda$.
Then
\begin{align*}
\mathcal H^{2\al}_{H_\la} (x_0)
= \int_{\R^3} \frac{\chi_{E_+}(x)  -\chi_{E_-}(x)  }{|x-x_0|^{3+2\alpha}}\, dx .
\end{align*}

Consider the transformation
$$
f(t e^{i\theta} , \frac{\lambda}{\pi}(\theta+z) ) =
(t e^{-i\theta} , -\frac{\lambda}{\pi}(\theta+z) )
$$
defined for $(t e^{i\theta} , \frac{\lambda}{\pi}(\theta+z) )\in E_+$.
Let us verify  that $(t e^{-i\theta} , -\frac{\lambda}{\pi}(\theta+z) )\in E_-$ if $(t e^{i\theta} , \frac{\lambda}{\pi}(\theta+z) ) \in E_+$. Indeed
$$
( t e^{-i\theta} ,  -\frac{\lambda}{\pi}(\theta+z) )= ( -t e^{i(-\pi-\theta)} , \frac\lambda\pi(-\pi-\theta  + (\pi-z) )) \in E_-
$$
if $t>0$ and $z \in (0,\pi)$. Moreover $f$ is a bijection from $E_+$ onto $E_-$, which preserves volume. Moreover, writing $x = (t e^{i\theta} , \frac{\lambda}{\pi}(\theta+z) )\in E_+$, $x^* = f(x) =(t e^{-i\theta} , -\frac{\lambda}{\pi}(\theta+z) ) \in E_-$ and $x_0 = (t_0,0,0)$, we have
\begin{align*}
|x-x_0|^2
& = (t\cos(\theta)-t_0)^2 + t^2 \sin^2(\theta) + \frac{\lambda^2}{\pi^2}( \theta+z)^2
\\
&=|x^*-x_0|^2 .
\end{align*}
Therefore, changing $x$ to $x^*=f(x)$ in the second integral below
\begin{align*}
\mathcal H^{2\al}_{H_\la} (x_0)
&= \lim_{r\to0}\int_{\{|x-x_0|\geq r\}} \frac{\chi_{E_+}(x)  -\chi_{E_-}(x)  }{|x-x_0|^{3+2\alpha}}\, dx
\\
&= \lim_{r\to0}\left(
\int_{\{|x-x_0|\geq r\}\cap E_+}  \frac{1}{|x-x_0|^{3+2\alpha}}\, dx
-
\int_{\{|x-x_0|\geq r\}\cap E_-}  \frac{1}{|x-x_0|^{3+2\alpha}}\, dx
\right)
\\
&= \lim_{r\to0}\left(
\int_{\{|x-x_0|\geq r\}\cap E_+}  \frac{1}{|x-x_0|^{3+2\alpha}}\, dx
-
\int_{\{|x-x_0|\geq r\}\cap E_+}  \frac{1}{|x^*-x_0|^{3+2\alpha}}\, dx^*
\right)
\\
&=0.
\end{align*}
\end{proof}

\end{document}